\numberwithin{equation}{section}
\theoremstyle{plain}
\newtheorem{thm}{Theorem}[section]
\newtheorem{prop}[thm]{Proposition}
\newtheorem{lemma}[thm]{Lemma}
\newtheorem{example}[thm]{Example}
\newcommand{\Rep}{\mbox{Rep}}
\newcommand{\N}{{\mathbb{N}}}
\newcommand{\R}{\mathbb{R}}
\newcommand{\Z}{\mathbb{Z}}
\newcommand{\C}{\mathbb{C}}
\newcommand{\jp}[1]{{\left\langle{#1}\right\rangle}}
\newlength{\dhatheight}
\begin{document}
	\title[GH for a class of evolution equations on compact Lie groups]	{Global hypoellipticity for a class of complex-valued evolution equations on compact Lie groups}


\author[Wagner A. A. de Moraes]{Wagner A. A. de Moraes}
\address{
Universidade Federal do Paran\'{a}, 
Departamento de Matem\'{a}tica,
C.P.19096, CEP 81531-990, Curitiba, Brazil
}
\email{wagnermoraes@ufpr.br}

\subjclass[2010]{Primary 35F05, 58D25; Secondary 35B10, 43A77}

\keywords{compact Lie groups, global hypoellipticity, Fourier series, complex-valued coefficient}



\begin{abstract}
We present necessary and sufficient conditions to have global hypoellipticity for a class of complex-valued coefficient first order evolution equations defined on $\mathbb{T}^1 \times G$, where $G$ is a compact Lie group. First, we show that the global hypoellipticity of the constant coefficient operator related to this operator is a necessary condition, but not a sufficient condition. Under certain hypothesis, we show that the global hypoellipticity of this class of operator is completely  characterized by Nirenberg-Treves' condition $(\mathcal{P})$.
\end{abstract}
\raggedbottom
\maketitle
\raggedbottom
\section{Introduction}

In this note, we study the global hypoellipticity of a class of first order evolution equations on $\mathbb{T}^1 \times G$, where $G$ is a compact Lie group. Precisely, 
we consider the operator $L: \mathcal{D}'(\mathbb{T}^1\times G) \to \mathcal{D}'(\mathbb{T}^1\times G)$ defined 
	$$
	L:=\partial_t+c(t)X+q,
	$$ 
	where $X$ is a vector field on $G$, $c \in C^\infty  (\mathbb{T}^1)$, $c(t)=a(t)+i b(t)$, and $q\in \C$ and we are interested to establish conditions on $c$ and $q$ to ensure that $u\in C^\infty(\mathbb{T}^1\times G)$ whenever $Lu \in C^{\infty}(\mathbb{T}^1\times G)$.
	
	We can summarize the results that we present in this work as follows:
	\begin{thm}\label{thmbsum}
		Let $G$ be a compact Lie group and consider the operator $L:\mathcal{D}'(\mathbb{T}^1\times G) \to \mathcal{D}'(\mathbb{T}^1\times G)$ defined as
		$$
		L:=\partial_t+c(t)X+q,
		$$ 
		where $X\in\mathfrak{g}$, $c \in C^\infty(\mathbb{T}^1)$, $c(t)=a(t)+i b(t)$, and $q\in \C$. Assume that 
		\begin{enumerate}[a)]
			\item $b \not \equiv 0$;
			\item $L_{0}:=\partial_t+c_0X+q$ is globally hypoelliptic, where $c_0$ is the average of $c$.
		\end{enumerate}
		Then $L$ is globally hypoelliptic if and only if $b$ does not change sign.
	\end{thm}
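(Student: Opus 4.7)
The plan is to apply Peter--Weyl analysis on $G$ to reduce the problem to a family of first-order ODEs on $\Tum$. Writing $u\in\mathcal{D}'(\mathbb{T}^1\times G)$ as a matrix-valued Fourier series $u(t,x)=\sum_{\xi\in\widehat{G}}d_\xi\,\Tr\bigl(\widehat{u}(t,\xi)\xi(x)\bigr)$ and choosing for each $\xi$ an orthonormal basis which diagonalizes the skew-Hermitian symbol $\sigma_X(\xi)$ with imaginary eigenvalues $i\mu_{\xi,1},\ldots,i\mu_{\xi,d_\xi}$, the equation $Lu=f$ decouples componentwise into the scalar periodic ODEs
\[
\partial_t w + \bigl(i\mu c(t)+q\bigr)w = g,\qquad \mu=\mu_{\xi,j}\in\R.
\]
Global hypoellipticity of $L$ then becomes equivalent to the statement that whenever the family $\{g_{\xi,j}(t)\}$ is rapidly decaying in $\jp{\xi}$ uniformly in $t$ (with all $t$-derivatives), so is the family of solutions $\{w_{\xi,j}(t)\}$.

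For the sufficiency direction, assume $b\not\equiv 0$ has constant sign. Solve the ODE by the integrating-factor formula; the resulting Green's kernel is a quotient whose denominator is $1-e^{-2\pi(i\mu c_0+q)}$ and whose numerator contains a factor of modulus $e^{\pm\mu\int b(r)\,dr}$ over the relevant subinterval. By choosing the forward $[t,t+2\pi]$ or backward $[t-2\pi,t]$ representation of $w$ according to the sign of $\mu$, the constant sign of $b$ makes the exponent in this real exponential non-positive, so the factor is bounded by a constant independent of $\mu$. The assumption that $L_0$ is globally hypoelliptic is, by standard criteria on $\Tum\times G$, equivalent to the polynomial lower bound $|1-e^{-2\pi(i\mu c_0+q)}|\ge C\jp{\mu}^{-N}$ on the denominator as $\mu$ ranges over the spectrum of $-iX$. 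Combining the boundedness of the kernel with this lower bound, and differentiating in $t$ (which merely replaces $g$ by $t$-derivatives plus terms governed by the same estimates), one recovers the rapid decay of $\{w_{\xi,j}\}$ uniformly in $\xi$, and hence $u\in C^\infty(\Tum\times G)$.

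For the necessity direction, assume $b$ changes sign and construct $f\in C^\infty(\Tum\times G)$ whose solution fails to be smooth. Pick a sequence of eigenvalues $\mu_n\to\infty$ of $-iX$; near a sign change $t_0$ of $b$, the primitive $B(t):=\int_{t_0}^t(b(r)-b_0)\,dr$ attains a local extremum, so the factor $e^{\mu_n B}$ cannot be controlled by any direction of integration. Concentrating the Fourier mass of $f$ on the modes $\mu_n$ with $t$-profiles resonantly supported near $t_0$ yields Fourier coefficients of $u$ that grow faster than any polynomial in $\mu_n$, contradicting $u\in C^\infty$.

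The main obstacle is the sufficiency: matching the positive and negative $\mu$ regimes by the correct choice of integration direction and checking that $t$-differentiation of the integrating-factor representation preserves the polynomial control provided by the global hypoellipticity of $L_0$. A secondary issue is uniformity in the Lie-group mode $\xi$; this follows from the clean componentwise decoupling above together with the fact that, once $X$ is fixed, the estimates depend only on the scalar eigenvalue $\mu_{\xi,j}$.
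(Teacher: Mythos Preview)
Your sufficiency argument is essentially the paper's: reduce to scalar ODEs via Peter--Weyl, write the periodic solution by the forward or backward Duhamel kernel, choose the direction of integration according to the sign of $\mu$ so that the real exponent $\mp\mu\int b$ is non-positive when $b$ has fixed sign, and control the denominator $1-e^{-2\pi i(\mu c_0-iq)}$ polynomially using the hypothesis that $L_0$ is globally hypoelliptic. Differentiation in $t$ via Fa\`a di Bruno costs only polynomial powers of $\jp{\eta}$, absorbed by the rapid decay of $\widehat{f}$. This is correct and follows the same route as the paper.

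The necessity sketch, however, has a genuine gap. To disprove global hypoellipticity you must exhibit $u\in\mathcal{D}'(\Tum\times G)\setminus C^\infty(\Tum\times G)$ with $Lu\in C^\infty$. If, as you write, the Fourier coefficients of $u$ ``grow faster than any polynomial in $\mu_n$'', then $u$ is not even a distribution, and you have only shown that a particular smooth $f$ admits no distributional solution --- a solvability failure, not a hypoellipticity failure. The actual construction requires a balance your outline misses. One first identifies $B:=\min_{0\le t,\tau\le 2\pi}\int_t^{t+\tau}b(w)\,\mathrm{d}w<0$ (negativity is exactly where the sign change of $b$ enters), and then \emph{pre-damps} the $t$-profile of $\widehat{f}(\cdot,\eta_j)$ by the factor $e^{B\mu_{d_{\eta_j}}(\eta_j)}$, simultaneously multiplying in the denominator $e^{2\pi i(\mu c_0-iq)}-1$. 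This makes $f$ smooth and, crucially, forces the Duhamel integral for $\widehat{u}(t,\eta_j)$ to be uniformly \emph{bounded}, so that $u\in\mathcal{D}'$. What must then be proved is a \emph{lower} bound: at the point $t_0$ where the minimum $B$ is attained, a Laplace-type estimate on $\int e^{\jp{\eta_j}\theta(\tau)}\,\mathrm{d}\tau$ with $\theta(\tau_0)=\theta'(\tau_0)=0$ gives $|\widehat{u}(t_0,\eta_j)|\geq C\jp{\eta_j}^{-1/2}$, which is slow decay (not growth) and rules out smoothness. Finally, your claim that $B(t)=\int_{t_0}^t(b(r)-b_0)\,dr$ has a local extremum at a sign-change point $t_0$ of $b$ is wrong in general, since $B'(t_0)=-b_0\neq 0$; the relevant extremum is that of the two-parameter integral $\int_t^{t+\tau}b$, and the vanishing $b(t_0+\tau_0)=0$ needed for the Laplace estimate is a consequence of that extremality, not of a zero of $b$ chosen in advance.
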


The case where $b\equiv 0$ was completely characterized in \cite{KMR19b} and it was obtained that $L$ is globally hypoelliptic if and only if $L_0$ is globally hypoelliptic, where $L_0=\partial_t+a_0 X+q$, with $a_0$ the average of $a$ on $\mathbb{T}^1$.

In \cite{GW72}, Greenfield and Wallach considered the case where $G=\mathbb{T}^1$, $c$ is a constant, and $q=0$, in which they obtained that $L$ is globally hypoelliptic if, and only if, either $\operatorname{Im}(c)\neq 0$ or $c$ is a non-Liouville irrational. When $c$ is not constant and $q=0$, J. Hounie showed in \cite{Hou79} that in the case where $b\equiv 0$, the operator $L$ is globally hypoelliptic if, and only if, $L_0=\partial_t+a_0\partial_x$ is globally hypoelliptic, where $a_0$ is the average of $a$. On the other hand, when $b\not\equiv 0$ the global hypoellipticity of $L$ is characterized by Nierember--Treves' condition $(\mathcal{P})$, which in this case translates to the property of the function $b$ does not change sign.

In \cite{GW73a}, Greenfield and Wallach proposed the following conjecture: if a closed smooth orientable manifold admits a globally hypoelliptic vector field, then this manifold is $C^\infty-$diffeomorphic to a torus, and this vector field is $C^\infty-$conjugated to a constant vector field whose coefficients satisfy a Diophantine condition. The validity of this conjecture was proved for some particular cases, including in the same article the proof for the case where the manifold is a compact Lie group. In light of this result, it is natural to assume that $q \neq 0$ when considering $G$ as a non-commutative compact Lie group, as for example in \cite{KMP21} and \cite{KMR19b}.

Still in the toroidal case, in \cite{Ber94} Bergamasco studied perturbations of globally hypoelliptic vector fields by zeroth order terms. One of the results obtained was that in the case where $b$ does not change sign, then for any $q$, the operator remains globally hypoelliptic. It is worth noting here that this phenomenon does not extend when considering $G$ as any compact Lie group. As stated in Theorem 1.1, the fact that $b$ does not change sign is not sufficient for the global hypoellipticity of $L$, and will be illustrated in Example \ref{exe}.

The paper is organized as follow. In Section \ref{Sec-2}, we present some preliminaries results and the notation used throughout the paper is fixed. In Section \ref{Sec-3} we introduce the operator in question, accompanied by a discussion of the previously results related to it. In Section \ref{Sec-4} we give necessary conditions and in Section \ref{Sec-5} we present the sufficient conditions of the Theorem \ref{thmbsum}.

\section{Preliminaries Results}\label{Sec-2}

In this section, we recall most of the notations and preliminary results necessary for the development of this study. The references \cite{FR16}  and \cite{livropseudo} contain a very detailed discussion of these concepts as well as proofs of all of the results presented here.

Let $G$ be a compact Lie group of dimension $\dim G=d$ and let $\operatorname{Rep}(G)$  be the set of continuous irreducible unitary representations of $G$. Since $G$ is compact, every continuous irreducible unitary representation $\eta$ is finite dimensional and it can be viewed as a matrix-valued function $\eta: G \to \mathbb{C}^{d_\eta\times d_\eta}$, where $d_\eta = \dim \eta$. We say that $\eta \sim \psi$ if there exists an unitary matrix $A\in C^{d_\eta \times d_\eta}$ such that $A\eta(x) =\psi(x)A$, for all $x\in G$. We will denote by $\widehat{G}$ the quotient of $\operatorname{Rep}(G)$ by this equivalence relation.

For $f \in L^1(G)$ the group Fourier transform of $f$ at $\eta \in \operatorname{Rep}(G)$ is
\begin{equation*}
	\widehat{f}(\eta)=\int_G f(x) \eta(x)^* \, dx,
\end{equation*}
where $dx$ is the normalized Haar measure on $G$.
By the Peter-Weyl theorem, we have that 
\begin{equation*}
	\mathcal{B} := \left\{\sqrt{d_\eta} \, \eta_{rs} \,; \ \eta=(\eta_{rs})_{r,s=1}^{d_\eta}, [\eta] \in \widehat{G} \right\},
\end{equation*}
is an orthonormal basis for $L^2(G)$, where we pick only one matrix unitary representation in each class of equivalence, and we may write
\begin{equation*}
	f(x)=\sum_{[\eta]\in \widehat{G}}d_\eta \operatorname{Tr}(\eta(x)\widehat{f}(\eta)).
\end{equation*}
Moreover, the Plancherel formula holds:
\begin{equation*}
	\label{plancherel} \|f\|_{L^{2}(G)}=\left(\sum_{[\eta] \in \widehat{G}}  d_\eta \ 
	\|\widehat{f}(\eta)\|_{{\mathtt{HS}}}^{2}\right)^{1/2}=:
	\|\widehat{f}\|_{\ell^{2}(\widehat{G})},
\end{equation*}
where 
\begin{equation*} \|\widehat{f}(\eta)\|_{{\mathtt{HS}}}^{2}=\operatorname{Tr}(\widehat{f}(\eta)\widehat{f}(\eta)^{*})=\sum_{r,s=1}^{d_\eta}  \bigr|\widehat{f}(\eta)_{rs}\bigr|^2.
\end{equation*}

The group Fourier transform of $u\in \mathcal{D}'(G)$ at a matrix unitary representation $\eta$ is the matrix $\widehat{u}(\eta) \in \mathbb{C}^{d_\eta \times d_\eta}$, whose components are given by
$$
\widehat{u}(\eta)_{rs} = \jp{u,\overline{\eta_{sr}}},
$$
where $\jp{\,\cdot\,,\,\cdot\,}$ denotes the distributional duality.

Let $\mathcal{L}_G$ be the Laplace-Beltrami operator of $G$. For each $[\eta] \in \widehat{G}$, its matrix elements are eigenfunctions of $\mathcal{L}_G$ correspondent to the same eigenvalue that we will denote by $-\nu_{[\eta]}$, where $\nu_{[\eta]} \geq 0$. Thus
\begin{equation*}\label{laplacian}
	-\mathcal{L}_G \eta_{rs}(x) = \nu_{[\eta]}\eta_{rs}(x), \quad \textrm{for all } 1 \leq r,s \leq d_\eta,
\end{equation*}
and we will denote by
$$
\jp \eta := \left(1+\nu_{[\eta]}\right)^{1/2}
$$
the eigenvalues of $(I-\mathcal{L}_G)^{1/2}.$ 

For $x\in G$, $X\in \mathfrak{g}$ and $f\in C^\infty(G)$, define 
$$
L_Xf(x):=\frac{d}{dt} f(x\exp(tX))\bigg|_{t=0}, 
$$
 and when there is no possibility of ambiguous meaning, we write only $X$ instead of $L_X$. 

Let  $P: C^{\infty}(G) \to C^{\infty}(G)$ be a continuous linear operator. The  symbol of the operator $P$ in $x\in G$ and $\eta \in \mbox{{Rep}}(G)$, $\eta=(\eta_{rs})_{r,s=1}^{d_\eta}$ is
$$
\sigma_P(x,\eta) := \eta(x)^*(P\eta)(x) \in \mathbb{C}^{d_\eta \times d_\eta},
$$
where $(P\eta)(x)_{rs}:= (P\eta_{rs})(x)$, for all $1\leq r,s \leq d_\eta$, and we have
$$
Pf(x) = \sum_{[\eta] \in \widehat{G}} \dim (\eta) \operatorname{Tr} \left(\eta(x)\sigma_P(x,\eta)\widehat{f}(\eta)\right),
$$
for all $f \in C^\infty(G)$ and $x\in G$.

When $P: C^\infty(G) \to C^\infty(G)$ is a continuous linear left-invariant operator we have that $\sigma_P$ is independent of $x\in G$ and
$$
\widehat{Pf}(\eta) = \sigma_P(\eta)\widehat{f}(\eta),
$$
for all $f \in C^\infty(G)$ and $[\eta] \in \widehat{G}$ and, by duality, this remains true for all $f \in \mathcal{D}'(G)$.

	We have that $X$ is a left-invariant operator on $G$ and $iX$ is a symmetric operator on $L^2(G)$. Thus, for each $[\eta] \in \widehat{G}$ we can choose a representative $\eta \in \Rep({G})$ such that
\begin{equation}\label{choose}
	\sigma_{X}(\eta)_{rs} =i\mu_r(\eta) \delta_{rs}, \quad 1 \leq r,s \leq d_\eta,
\end{equation}
with $\mu_r(\eta)\in\R$ for all $[\eta] \in\widehat{G}$, and we have
\begin{equation}\label{controleigen}
	|\mu_r(\eta)| \leq C \jp{\eta},
\end{equation}
for all $[\eta] \in \widehat{G}$, $1 \leq r \leq d_\eta$, where $C>0$ is a constant which depends only on $X$. Moreover, as $iX$ is not bounded from $L_s^2(G)$ to $L_{s-\frac{1}{2}}^2(G)$, one can verify that there must exist a sequence $([\eta_j])_{j\in\mathbb{N}}$ in $\widehat{G}$ and $C>0$ such that
\begin{equation}\label{growth}
	\mu_{r}(\eta_j)\geq C\langle \eta_j\rangle^{\frac{1}{2}},
\end{equation}
for some $1\leq r \leq d_{\eta_j}$ and every $j\in\N$. We may assume without loss of generality that $r=d_{\eta_j}$, for all $j\in \mathbb{N}$. For a detailed discussion about these properties see  \cite{CKKM24}, \cite{KMR19b} and \cite{livropseudo}.

Given $f \in L^1(\mathbb{T}^1 \times G)$ and  $\eta \in {\operatorname{Rep}}(G)$, the partial Fourier coefficient of $f$ with respect to the second variable at $\eta$ is defined by 
$$
\widehat{f}(t, \eta) := \int_{G} f(t,x)\, \eta(x)^* \, dx \in \mathbb{C}^{d_\eta \times d_\eta}, \quad t \in \mathbb{T}^1,
$$
with components
$$
\widehat{f}(t, \eta)_{rs} = \int_{G} f(t,x)\, \overline{\eta(x)_{sr}} \, dx, \quad 1 \leq r,s\leq d_\eta.
$$

Given $u \in \mathcal{D}'(\mathbb{T}^1\times G)$, $\eta \in {\operatorname{Rep}}(G)$ and $1\leq r,s \leq d_\eta$. The $rs$-component of  the partial Fourier coefficient of $u$ with respect to the second variable at $\eta$ is the linear functional defined by
$$
\begin{array}{rccl}
	\widehat{u}(\: \cdot\:,\eta )_{rs}: & C^\infty(\mathbb{T}^1) & \longrightarrow & \mathbb{C} \\
	& \psi & \longmapsto & \jp{\widehat{u}(\: \cdot \:,\eta)_{rs},\psi} := \jp{u,\psi\times\overline{\eta_{sr}}},
\end{array}
$$
where $\psi\times\overline{\eta_{sr}}(t,x):=\psi(t)\overline{\eta_{sr}(x)}$.

We have the following characterization of $C^\infty(\mathbb{T}^1\times G)$ and $\mathcal{D}'(\mathbb{T}^1\times G)$ through their partial Fourier coefficients presentend on \cite{KMR19}.
\begin{thm}[Theorem 3.5 of \cite{KMR19}]\label{caracsmooth}
	Let $G$ be a compact Lie group, and let $\{\widehat{f}(\:\cdot \:, \eta)_{rs} \}$ be a sequence of functions on $\mathbb{T}^1$. Define
	$$
	f(t,x) := \sum_{[\eta]\in \widehat{G}} d_\eta \sum_{r,s=1}^{d_\eta} \widehat{f}(t, \eta)_{rs} \eta_{sr}(x).
	$$ 
	Then $f \in C^\infty(\mathbb{T}^1 \times G)$ if and only if $\widehat{f}(\: \cdot\:,\eta)_{rs} \in C^\infty(\mathbb{T}^1)$, for all $[\eta]\in\widehat{G}$, $1 \leq r,s \leq d_\eta$ and for every $\beta \in \N_0$ and $\ell >0$ there exist $C_{\beta \ell} > 0$ such that
	\begin{equation*}\label{carac}
		\bigl|\partial_t^\beta \widehat{f}(t,\eta)_{rs}\bigl| \leq C_{\beta\ell} \langle \eta \rangle ^{-\ell}, \quad \forall t\in \mathbb{T}^1, \ [\eta]\in \widehat{G}, \ 1\leq r,s\leq d_\eta.
	\end{equation*}
\end{thm}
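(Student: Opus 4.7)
The plan is to prove the two directions separately, exploiting the fact that $\widehat{f}(t,\eta)_{rs}$ is the partial $L^2(G)$ Fourier coefficient of $f(t,\cdot)$ against $\eta_{sr}$, together with the key property that each matrix entry $\eta_{sr}$ is an eigenfunction of $\mathcal{L}_G$ with eigenvalue $-\nu_{[\eta]}$, so that $(I-\mathcal{L}_G)^N \eta_{sr} = \langle\eta\rangle^{2N}\eta_{sr}$.

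For the necessity, suppose $f\in C^\infty(\mathbb{T}^1\times G)$. Differentiation under the integral sign (justified by compactness of $\mathbb{T}^1\times G$ and smoothness of $f$) shows that $\widehat{f}(\cdot,\eta)_{rs}\in C^\infty(\mathbb{T}^1)$ with
\begin{equation*}
\partial_t^\beta \widehat{f}(t,\eta)_{rs} = \int_G \partial_t^\beta f(t,x)\,\overline{\eta(x)_{sr}}\,dx.
\end{equation*}
Using self-adjointness of $(I-\mathcal{L}_G)^N$ on $L^2(G)$ together with \eqref{laplacian}, I rewrite this integral, for any $N\in\mathbb{N}_0$, as
\begin{equation*}
\partial_t^\beta \widehat{f}(t,\eta)_{rs} = \langle\eta\rangle^{-2N}\int_G \bigl[(I-\mathcal{L}_G)^N\partial_t^\beta f\bigr](t,x)\,\overline{\eta(x)_{sr}}\,dx.
\end{equation*}
Since $|\eta(x)_{sr}|\le 1$ for a unitary representation and $(I-\mathcal{L}_G)^N\partial_t^\beta f$ is continuous on the compact set $\mathbb{T}^1\times G$, its sup-norm is finite, yielding $|\partial_t^\beta \widehat{f}(t,\eta)_{rs}|\le C_{\beta,N}\langle\eta\rangle^{-2N}$; choosing $2N\ge\ell$ gives the asserted decay.

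For the sufficiency, assume the hypothesized rapid decay. It suffices to show that for every differential operator of the form $D=\partial_t^\beta D_G$ with $D_G$ a left-invariant differential operator of order $m$ on $G$, the formally differentiated series
\begin{equation*}
\sum_{[\eta]\in\widehat{G}}d_\eta\sum_{r,s=1}^{d_\eta}\partial_t^\beta \widehat{f}(t,\eta)_{rs}\,(D_G\eta_{sr})(x)
\end{equation*}
converges uniformly on $\mathbb{T}^1\times G$, since every partial derivative on the product manifold decomposes into a finite linear combination of such operators. Left-invariance of $D_G$ gives $(D_G\eta)(x)=\eta(x)\sigma_{D_G}(\eta)$, hence the entrywise bound $|(D_G\eta_{sr})(x)|\le d_\eta\|\sigma_{D_G}(\eta)\|_{\mathrm{op}}\le C\,d_\eta\langle\eta\rangle^m$, using that the symbol of an order-$m$ left-invariant differential operator satisfies $\|\sigma_{D_G}(\eta)\|_{\mathrm{op}}\le C\langle\eta\rangle^m$ (standard in the Ruzhansky--Turunen calculus, \cite{FR16,livropseudo}). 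Combined with the hypothesis and the standard estimates $d_\eta\le C\langle\eta\rangle^{d/2}$ and $\sum_{[\eta]}\langle\eta\rangle^{-s}<\infty$ for $s$ large enough (Weyl's law), each term is dominated, uniformly in $(t,x)$, by a multiple of $C_{\beta,\ell}\langle\eta\rangle^{2d+m-\ell}$; choosing $\ell$ large produces a convergent Weierstrass majorant, so $Df\in C(\mathbb{T}^1\times G)$ and hence $f\in C^\infty(\mathbb{T}^1\times G)$.

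The main obstacle is the polynomial bookkeeping in the sufficiency step: three factors of $d_\eta$ (one from Plancherel, a $d_\eta^2$ from summing over matrix entries) together with an extra $\langle\eta\rangle^m$ from the symbol of $D_G$ must be absorbed against the arbitrary polynomial decay granted by the hypothesis, and this requires invoking Weyl's dimension and eigenvalue bounds simultaneously to guarantee summability over $\widehat{G}$.
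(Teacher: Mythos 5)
Your proof is correct. Note that the paper does not prove this statement itself --- it is quoted as Theorem 3.5 of \cite{KMR19} --- but your argument (necessity via self-adjointness of powers of $I-\mathcal{L}_G$ and the eigenfunction relation \eqref{laplacian}, sufficiency via the symbol bound for left-invariant operators, the Weyl estimates on $d_\eta$ and on $\sum_{[\eta]}\langle\eta\rangle^{-s}$, and a Weierstrass majorant for every termwise-differentiated series) is precisely the standard proof given in that reference, so there is nothing to flag beyond the harmless looseness of the entrywise bound $|(D_G\eta_{sr})(x)|\le d_\eta\|\sigma_{D_G}(\eta)\|_{\mathrm{op}}$, which the arbitrary polynomial decay absorbs anyway.
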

\begin{thm}[Theorem 3.6 of \cite{KMR19}]\label{caracdist}
	Let $G$ be a compact Lie groups and $\bigl\{\widehat{u}(\: \cdot \:,\eta)_{rs} \bigr\}$ be a sequence of distributions on $\mathbb{T}^1$.  Define \begin{equation*}
		u=\sum_{[\eta]\in \widehat{G}} d_\eta \sum_{r,s=1}^{d_\eta} \widehat{u}(\:\cdot\:,\eta)_{rs}{\eta_{sr}}
	\end{equation*} 
	Then $u \in \mathcal{D}'(\mathbb{T}^1 \times G)$ if and only if there exist $K \in \N$ and $C>0$ such that
	\begin{equation*}\label{pk}
		\bigl| \jp{\widehat{u}(\cdot,\eta)_{rs},\varphi}\bigr| \leq C \, p_K(\varphi) \langle \eta \rangle ^{K}, 
	\end{equation*} 
	for all $\varphi \in C^{\infty}(\mathbb{T}^1)$ and $[\eta]\in \widehat{G}$, where $p_K(\varphi) := \sum\limits_{\beta \leq K} \|\partial_t^\beta \varphi\|_{L^\infty(\mathbb{T}^1)}.$
\end{thm}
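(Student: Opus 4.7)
The plan is to prove both directions using three ingredients: (i) orthogonality of matrix coefficients (Peter--Weyl), (ii) the ellipticity of the Laplace--Beltrami operator $\mathcal{L}_G$ via \eqref{laplacian}, and (iii) the smooth-function characterization of Theorem \ref{caracsmooth}.

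\emph{Forward direction} $(\Rightarrow)$. Since $\mathbb{T}^1\times G$ is a compact manifold, $u$ has finite order: there exist $K_0\in\N$ and $C_0>0$ with
$$
|\jp{u,\phi}|\le C_0\sum_{|\alpha|\le K_0}\|\partial^\alpha\phi\|_{L^\infty(\mathbb{T}^1\times G)},\qquad \phi\in C^\infty(\mathbb{T}^1\times G),
$$
where $\partial^\alpha$ ranges over monomials in $\partial_t$ and in an orthonormal basis $Y_1,\ldots,Y_d$ of left-invariant vector fields on $G$. Applying this to $\phi(t,x)=\varphi(t)\overline{\eta_{sr}(x)}$, the derivatives factor as $\partial_t^\beta\varphi\cdot Y^\gamma\overline{\eta_{sr}}$, and a polynomial bound $\|Y^\gamma\overline{\eta_{sr}}\|_{L^\infty}\le C\langle\eta\rangle^{|\gamma|}$ follows from the identity $Y_j\eta(x)=\eta(x)\sigma_{Y_j}(\eta)$, the first-order symbol estimate $\|\sigma_{Y_j}(\eta)\|_{\mathrm{op}}\le C\langle\eta\rangle$ (of the same type as \eqref{controleigen}), and unitarity of $\eta(x)$, which via Cauchy--Schwarz on rows gives $\sum_k|\eta_{sk}(x)|^2=1$. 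This produces \eqref{pk} with $K=K_0$ and $C$ independent of $\varphi,\eta,r,s$.

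\emph{Reverse direction} $(\Leftarrow)$. Given \eqref{pk}, for every $\phi\in C^\infty(\mathbb{T}^1\times G)$ put
$$
\Phi_{rs}^\eta(t):=\int_G \eta_{sr}(x)\,\phi(t,x)\,dx\in C^\infty(\mathbb{T}^1),
$$
and define
$$
\jp{u,\phi}:=\sum_{[\eta]\in\widehat{G}}d_\eta\sum_{r,s=1}^{d_\eta}\jp{\widehat{u}(\cdot,\eta)_{rs},\Phi_{rs}^\eta}.
$$
Substituting a pure tensor $\phi=\varphi\otimes\overline{\eta_{sr}}$ and invoking Peter--Weyl orthogonality $\int_G\eta'_{s'r'}(x)\overline{\eta_{sr}(x)}\,dx=d_\eta^{-1}\delta_{[\eta']=[\eta]}\delta_{s's}\delta_{r'r}$ collapses the series to $\jp{\widehat{u}(\cdot,\eta)_{rs},\varphi}$, so $u$ recovers the prescribed partial Fourier coefficients. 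To justify convergence and continuity, I use \eqref{laplacian}: writing $\eta_{sr}=\langle\eta\rangle^{-2N}(I-\mathcal{L}_G)^N\eta_{sr}$ and transferring $(I-\mathcal{L}_G)^N$ onto $\phi$ by self-adjointness yields, for every $N\in\N$,
$$
|\partial_t^\beta\Phi_{rs}^\eta(t)|\le \langle\eta\rangle^{-2N}\,\|(I-\mathcal{L}_G)^N\partial_t^\beta\phi\|_{L^\infty(\mathbb{T}^1\times G)},
$$
hence $p_K(\Phi_{rs}^\eta)\le C_{K,N}\|\phi\|_{C^{K+2N}}\langle\eta\rangle^{-2N}$. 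Combined with \eqref{pk}, each summand is bounded by $C\|\phi\|_{C^{K+2N}}\langle\eta\rangle^{K-2N}$, and the Weyl-type polynomial growth of $d_\eta$ in $\langle\eta\rangle$ together with the polynomial growth of $\widehat{G}$ guarantees $\sum_{[\eta]}d_\eta^3\langle\eta\rangle^{K-2N}<\infty$ for $N$ sufficiently large. Therefore the series converges absolutely, $|\jp{u,\phi}|\le C'\|\phi\|_{C^{K+2N}}$, and $u\in\mathcal{D}'(\mathbb{T}^1\times G)$.

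\emph{Main obstacle.} The substantive point is the reverse implication: one must simultaneously select the correct slot $\Phi_{rs}^\eta$ so that Peter--Weyl orthogonality identifies the partial Fourier coefficients of the constructed $u$, and quantify the decay of $\Phi_{rs}^\eta$ sharply enough to absorb both the $\langle\eta\rangle^K$ from hypothesis \eqref{pk} and the cubic factor $d_\eta^3$ arising from the Plancherel-type summation. Ellipticity of $\mathcal{L}_G$ handles both issues uniformly in $[\eta]$ through integration by parts, after which the forward direction becomes essentially mechanical.
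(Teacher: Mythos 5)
Your proof is correct, and there is nothing in this paper to compare it against: the statement is quoted verbatim as Theorem 3.6 of \cite{KMR19} and no proof is given here. Your argument (finite order of distributions on a compact manifold plus the bound $|Y^\gamma\eta_{sr}|\le C\langle\eta\rangle^{|\gamma|}$ for the forward direction; Peter--Weyl orthogonality to identify the coefficients and integration by parts against $(I-\mathcal{L}_G)^N$ to absorb $\langle\eta\rangle^{K}$ and the polynomially growing factor $d_\eta^3$ for the reverse direction) is the standard route and is essentially the one followed in the cited reference.
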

	\section{A class of first order evolution equations}\label{Sec-3}

	Let $G$ be a compact Lie group and consider the operator $L: \mathcal{D}'(\mathbb{T}^1\times G) \to \mathcal{D}'(\mathbb{T}^1\times G)$ defined 
	\begin{equation}\label{operador}
			L:=\partial_t+c(t)X+q,
	\end{equation}
	where $X$ is a vector field on $G$, $c(t)=a(t)+i b(t)$, with $a,b \in C^\infty(\mathbb{T}^1;\mathbb{R})$ and $q\in \C$.
	
	 The global hypoellipticity of \eqref{operador} in the case where either $c$ is a constant function or $\operatorname{Im}(c)=b\equiv 0$ was completely characterized in \cite{KMR19b} in a more general context that can be adapted to our setting as follow.
	 \begin{thm}\label{GH-KMR}[Theorems 3.3 and 5.1 of \cite{KMR19b}]
	 	Let $G$ be a compact Lie group, $X \in \mathfrak{g}$ and $c,q \in \mathbb{C}$. The operator $L=\partial_t+c X+q$ is globally hypoelliptic  if and only if the following conditions are satisfied: 
	 \begin{enumerate}[1.]
	 	\item The set
	 	$$
	 	\mathcal{N}=\{(k, [\eta]) \in  \mathbb{Z} \times \widehat{G}; \ k+c \mu_r(\eta)-iq = 0, \ \mbox{for some } 1 \leq r \leq d_\eta  \}
	 	$$
	 	is finite.
	 	\item $\exists C, \, M>0$ such that
	 	\begin{equation*}\label{hypothesis}
	 		|k+c \mu_r(\eta)-iq|\geq C (|k| +\langle \eta \rangle )^{-M}, 
	 	\end{equation*}
	 	for all  $k \in \mathbb{Z}, \ [\eta] \in \widehat{G}, \  1 \leq r \leq d_\eta,$ whenever $k+c \mu_r(\eta)-iq \neq 0$.
	 \end{enumerate}
 \end{thm}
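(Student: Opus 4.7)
My plan is to reduce everything to the full Fourier series on $\mathbb{T}^1\times G$, where the constant-coefficient operator $L=\partial_t+cX+q$ becomes diagonal. Since $X$ is left-invariant, $\widehat{Xu}(\eta)=\sigma_X(\eta)\widehat{u}(\eta)$, and the representatives chosen in \eqref{choose} make $\sigma_X(\eta)$ the diagonal matrix with entries $i\mu_r(\eta)$. Combining with $\widehat{\partial_t u}(k,\eta)=ik\,\widehat{u}(k,\eta)$, I get the symbolic identity
$$
\widehat{Lu}(k,\eta)_{rs}=i\bigl(k+c\mu_r(\eta)-iq\bigr)\widehat{u}(k,\eta)_{rs},\qquad k\in\Z,\ [\eta]\in\widehat{G},\ 1\le r,s\le d_\eta.
$$
So the whole theorem is a statement about inverting (or failing to invert) these scalar multipliers $\tau_{k,\eta,r}:=k+c\mu_r(\eta)-iq$, and I should read the regularity conditions from Theorems \ref{caracsmooth} and \ref{caracdist} in terms of the double sequence $\{\widehat{u}(k,\eta)_{rs}\}$.

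For the necessity direction I would build distributional solutions that are not smooth. If $\mathcal{N}$ is infinite, pick a sequence of distinct triples $(k_j,[\eta_j],r_j)$ with $\tau_{k_j,\eta_j,r_j}=0$, and define $u$ by $\widehat{u}(k_j,\eta_j)_{r_jr_j}=1$ and all other coefficients zero. By Theorem \ref{caracdist} (bounded coefficients give a distribution) $u\in\mathcal{D}'(\TS)$, and $Lu=0\in C^\infty$, yet $\widehat{u}$ does not decay, so $u\notin C^\infty$ by Theorem \ref{caracsmooth}. If instead $\mathcal{N}$ is finite but \eqref{hypothesis} fails, I extract a sequence with $0<|\tau_{k_j,\eta_j,r_j}|<(|k_j|+\langle\eta_j\rangle)^{-j}$ and set $\widehat{u}(k_j,\eta_j)_{r_jr_j}=1$; then $\widehat{Lu}$ decays faster than any polynomial so $Lu\in C^\infty$, but $\widehat{u}$ is bounded and non-decaying, giving $u\in\mathcal{D}'\setminus C^\infty$.

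For sufficiency assume both conditions hold and suppose $u\in\mathcal{D}'(\TS)$ with $f:=Lu\in C^\infty(\TS)$. At every $(k,\eta,r,s)$ with $\tau_{k,\eta,r}\ne 0$ the symbolic identity forces
$$
\widehat{u}(k,\eta)_{rs}=\frac{1}{i\tau_{k,\eta,r}}\,\widehat{f}(k,\eta)_{rs},
$$
and condition \eqref{hypothesis} bounds $|\tau_{k,\eta,r}|^{-1}$ by $C^{-1}(|k|+\langle\eta\rangle)^{M}$. Since $f\in C^\infty$, Theorem \ref{caracsmooth} gives rapid decay of $\widehat{f}(k,\eta)_{rs}$ in both $k$ and $\langle\eta\rangle$; multiplication by a polynomial weight preserves rapid decay, so $\widehat{u}(k,\eta)_{rs}$ decays rapidly off the exceptional set $\mathcal{N}$. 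On $\mathcal{N}$ (which is finite by condition 1) there are only finitely many free coefficients $\widehat{u}(k,\eta)_{rs}$; the distributional bound \eqref{pk} controls them trivially, and a finite sum of trigonometric-times-matrix-entry terms is smooth. Applying Theorem \ref{caracsmooth} in reverse yields $u\in C^\infty(\TS)$.

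The main technical obstacle is translating between the abstract distributional condition \eqref{pk} and pointwise bounds on the numbers $\widehat{u}(k,\eta)_{rs}$: in the necessity construction I must verify that the test against a smooth $\varphi$ really produces the seminorm bound required by Theorem \ref{caracdist}, which uses the fact that $\widehat{u}(\,\cdot\,,\eta)_{rs}$ reduces to a single exponential $e^{ik_j t}$ at the chosen frequencies. A secondary subtlety is separating the role of $r$ from that of $s$: the symbol $\tau_{k,\eta,r}$ depends only on $r$, so the index $s$ should be treated as a spectator both when inverting and when constructing counterexamples, and the growth condition \eqref{growth} is not needed here but only later in the proof of Theorem \ref{thmbsum}.
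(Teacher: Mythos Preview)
The paper does not give its own proof of this statement: Theorem~\ref{GH-KMR} is quoted verbatim from \cite{KMR19b} (Theorems~3.3 and~5.1 there) and is used as a black box in the rest of the argument, so there is no in-paper proof to compare against. Your outline is the standard multiplier argument and is essentially what the cited reference does: diagonalize $L$ via the full Fourier transform on $\mathbb{T}^1\times G$, read off the scalar symbols $\tau_{k,\eta,r}=k+c\mu_r(\eta)-iq$, and then (i) build nondecaying distributional solutions when either condition fails, (ii) invert the symbol with a polynomial loss when both conditions hold.

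Two small points worth tightening. First, Theorems~\ref{caracsmooth} and~\ref{caracdist} as stated here concern only the \emph{partial} Fourier coefficients $\widehat{u}(t,\eta)_{rs}$, not the full coefficients $\widehat{u}(k,\eta)_{rs}$; you are implicitly combining them with the classical characterization of $C^\infty(\mathbb{T}^1)$ and $\mathcal{D}'(\mathbb{T}^1)$ via decay/growth of Fourier coefficients in $k$, which is fine but should be said. Second, in the necessity construction with $\mathcal{N}$ infinite, note that for fixed $(\eta,r)$ the equation $k+c\mu_r(\eta)-iq=0$ has at most one integer solution $k$, so after passing to a subsequence you may assume the $[\eta_j]$ are pairwise distinct; this guarantees each partial coefficient $\widehat{u}(\cdot,\eta_j)_{r_jr_j}$ is a single exponential $e^{ik_jt}$ and the distributional bound \eqref{pk} is immediate.
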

	 \begin{thm}\label{GH-KMR2}[Propositions 6.6 and 6.12 of \cite{KMR19b}]
	 Let $G$ be a compact Lie group, $X \in \mathfrak{g}$, $q \in \mathbb{C}$ and $a(t)$ be a smooth real-valued function on $\mathbb{T}^1$. The operator $L=\partial_t + a(t)X+q$ is globally hypoelliptic if and only if the operator $L_{0}=\partial_t + a_0X+q$ is globally hypoellitpic, where $a_0 = \int_{\mathbb{T}^1} a(s) \mathrm{d} s$.
\end{thm}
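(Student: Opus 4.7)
The plan is to reduce $L = \partial_t + a(t)X + q$ to its constant-coefficient counterpart $L_0 = \partial_t + a_0 X + q$ through a Fourier-multiplier conjugation that is a topological isomorphism of both $C^\infty(\mathbb{T}^1\times G)$ and $\mathcal{D}'(\mathbb{T}^1\times G)$. Set $A(t) := \int_0^t (a(s)-a_0)\, ds$, which is smooth and $2\pi$-periodic because $a$ has mean $a_0$. For each $[\eta] \in \widehat{G}$ fix the representative with diagonal symbol as in \eqref{choose}, and define $\Phi$ at the level of partial Fourier coefficients by
$$
\widehat{\Phi u}(t,\eta)_{rs} := e^{iA(t)\mu_r(\eta)}\, \widehat{u}(t,\eta)_{rs}, \quad 1\leq r,s\leq d_\eta.
$$
Using $A'(t)=a(t)-a_0$ together with the componentwise identity $\widehat{Lu}(t,\eta)_{rs} = \partial_t \widehat{u}(t,\eta)_{rs} + ia(t)\mu_r(\eta)\widehat{u}(t,\eta)_{rs} + q\widehat{u}(t,\eta)_{rs}$, a direct calculation yields the intertwining
$$
\widehat{L_0(\Phi u)}(t,\eta)_{rs} = e^{iA(t)\mu_r(\eta)}\, \widehat{Lu}(t,\eta)_{rs} = \widehat{\Phi(Lu)}(t,\eta)_{rs},
$$
so that $L_0 \circ \Phi = \Phi \circ L$ on the Fourier side.

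Next, I would check that $\Phi$ (together with its formal inverse $\Phi^{-1}$, obtained by replacing $e^{iA\mu_r}$ by $e^{-iA\mu_r}$) maps $C^\infty(\mathbb{T}^1\times G)$ into itself and $\mathcal{D}'(\mathbb{T}^1\times G)$ into itself, via the characterizations in Theorems \ref{caracsmooth} and \ref{caracdist}. Since $a$ and $\mu_r(\eta)$ are real, $|e^{iA(t)\mu_r(\eta)}|=1$; by induction using $A'(t)=a(t)-a_0$ together with \eqref{controleigen}, one obtains $|\partial_t^\beta e^{iA(t)\mu_r(\eta)}| \leq C_\beta \langle\eta\rangle^\beta$ for every $\beta$. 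A Leibniz expansion of $\partial_t^\beta(e^{iA(t)\mu_r(\eta)}\widehat{u}(t,\eta)_{rs})$ then shows that the rapid decay of $\partial_t^{\beta'}\widehat{u}(t,\eta)_{rs}$ in $\langle\eta\rangle$ absorbs the polynomial growth introduced by the multiplier, handling the smooth case. For the distributional case, one writes $\jp{\widehat{\Phi u}(\cdot,\eta)_{rs},\psi} = \jp{\widehat{u}(\cdot,\eta)_{rs}, e^{iA(t)\mu_r(\eta)}\psi}$ and uses $p_K(e^{iA(t)\mu_r(\eta)}\psi) \leq C_K \langle\eta\rangle^K p_K(\psi)$ to upgrade \eqref{pk} for $\widehat{u}$ into a bound of the same form for $\widehat{\Phi u}$, with exponent $2K$ in place of $K$.

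Combining both ingredients gives the conclusion in one line: for $u\in\mathcal{D}'(\mathbb{T}^1\times G)$, we have $Lu\in C^\infty(\mathbb{T}^1\times G)$ if and only if $\Phi(Lu) = L_0(\Phi u) \in C^\infty(\mathbb{T}^1\times G)$, while $u \in C^\infty(\mathbb{T}^1\times G)$ if and only if $\Phi u \in C^\infty(\mathbb{T}^1\times G)$; hence $L$ is globally hypoelliptic if and only if $L_0$ is. I expect the main delicate point to be the verification that $\Phi$ preserves $\mathcal{D}'(\mathbb{T}^1\times G)$, since the multiplier $e^{iA(t)\mu_r(\eta)}$ depends nontrivially on both variables. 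The argument works precisely because the polynomial $\langle\eta\rangle$-growth of the $t$-derivatives of the multiplier is exactly the kind of loss permitted by \eqref{pk}; the real-valuedness of $a$ (equivalently, the unit modulus of the multiplier) is essential here, since a complex $a$ would generate real exponential growth in $\langle\eta\rangle$ along some spectral directions and destroy the argument, which is precisely why the main theorem of the paper requires a separate analysis in the case $b\not\equiv 0$.
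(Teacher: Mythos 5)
Your proposal is correct and is essentially the argument the paper itself indicates: the conjugation $\Phi$ you construct is exactly the automorphism $\Psi_a$ of \eqref{autom} satisfying $\Psi_a\circ L = L_0\circ\Psi_a$, and your verification that the unimodular multiplier $e^{iA(t)\mu_r(\eta)}$ preserves both $C^\infty(\mathbb{T}^1\times G)$ and $\mathcal{D}'(\mathbb{T}^1\times G)$ via Theorems \ref{caracsmooth} and \ref{caracdist} is the content of the proof deferred to \cite{KMR19b}. Your closing remark about why real-valuedness of $a$ is essential matches the paper's own explanation of why the case $b\not\equiv 0$ requires a different analysis.
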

  Hence, we will assume throughout the paper that $b(t)$ is not a constant function. 

Notice that in Theorem  \ref{GH-KMR2} the global hypoellipticity of $L$  when $b \equiv 0$ is strictly related to the global hypoellipticity of the constant coefficient operator
	$
	L_{0}=\partial_t+a_0X+q
	$
	because these two operator are conjugated by the automorphism 
	\begin{equation}\label{autom}
		\Psi_a  u(t,x) := \sum_{[\eta]\in \widehat{G}}d_\eta \sum_{r,s = 1}^{d_\eta} e^{i\mu_r(\eta) A(t)}\widehat{u}(t,\eta)_{rs}\,{\eta_{sr}(x)},
	\end{equation} 
	where $A(t)$ is a primitive for $a(t)$, that is, $\Psi_a \circ L = L_{0} \circ \Psi_a$ (for more details, see Section 6 of \cite{KMR19b}). By the same approach, in the case where $b$ is not a constant function, we could define an operator $\Psi_c$ that conjugates the operators $L$ and $L_{0}$, but we would not have the growth control of the term $e^{-\mu_r(\eta)B(t)}$ that would appear in the definition of $\Psi_c$, which implies that $\Psi_c$ is not an automorphism in the space of smooth functions and therefore the global hypoellipticity of $L$ is not equivalent to the global hypoellipticity of $L_{0}$.  Although these properties are not equivalent, we will prove in Proposition \ref{implicationGH} that the global hypoellipticity of $L_{0}$ is a necessary condition for the global hypoellipticity of $L$ and we give an example where this is not a sufficient condition. 
	\newpage 
	First, observe that by the automorphism $\Psi_a$ given in \eqref{autom} we may assume that $a(t)$ is a constant function. Henceforth we will study the operator,
	\begin{equation}\label{op-L}
			L=\partial_t+(a_0+ib(t))X+q,
	\end{equation}
	where $b$ is a real-valued smooth function on $\mathbb{T}^1$, $X$ is a vector field on $G$ and $q \in \mathbb{C}$.

	Consider the equation $Lu=f \in C^\infty(\mathbb{T}^1\times G)$. Taking the partial Fourier coefficient with respect to the second variable we obtain
	$$
	\widehat{f}(t,\eta)=\widehat{Lu}(t,\eta) = \partial_t \widehat{u}(t,\eta) + c(t)\sigma_X(\eta)\widehat{u}(t,\eta)+q\widehat{u}(t,\eta), \quad [\eta] \in \widehat{G},
	$$
	where the representation $\eta$ satisfies \eqref{choose}. Hence, for each $[\eta] \in \widehat{G}$ we obtain the following system of equations
	\begin{equation*}\label{fouriereq}
		\widehat{f}(t,\eta)_{rs}=\widehat{Lu}(t,\eta)_{rs} = \partial_t \widehat{u}(t,\eta)_{rs} + i(\mu_r(\eta)c(t)-iq)\widehat{u}(t,\eta)_{rs},
	\end{equation*}
	with $1\leq r,s\leq d_\eta$.
	
	Let 
	$$
	C(t) = \int_{0}^{t}c(\tau)\mathrm{d}\tau - c_0t, \quad \text{where} \quad  c_0 = \frac{1}{2\pi} \int_{0}^{2\pi} c(\tau)\mathrm{d}\tau.
	$$
	Multiplying by $e^{i\mu_r(\eta)C(t)}$, we obtain
	$$
	\partial_t \widehat{u}(t,\eta)_{rs}e^{i\mu_r(\eta)C(t)} + i(\mu_r(\eta)c(t)-iq)\widehat{u}(t,\eta)_{rs}e^{i\mu_r(\eta)C(t)} = \widehat{f}(t,\eta)_{rs}e^{i\mu_r(\eta)C(t)}
	$$
	Then
	$$
	\partial_t \bigl[ \widehat{u}(t,\eta)_{rs}e^{i\mu_r(\eta)C(t)} \bigl] + i(\mu_r(\eta)c_0-iq)\widehat{u}(t,\eta)_{rs}e^{i\mu_r(\eta)C(t)} = \widehat{f}(t,\eta)_{rs}e^{i\mu_r(\eta)C(t)},
	$$
	that is, for each $\eta \in \widehat{G}$ and $1\leq r,s \leq d_\eta$, we have that $\widehat{u}(t,\eta)_{rs}e^{i\mu_r(\eta)C(t)}$ is a solution of
	\begin{equation}\label{systemode}
		\partial_t v(t,\eta)_{rs} + i(\mu_r(\eta)c_0-iq) v(t,\eta)_{rs} = g(t,\eta)_{rs},
	\end{equation}
	where $g(t,\eta)_{rs}=\widehat{f}(t,\eta)_{rs}e^{i\mu_r(\eta)C(t)}$. The solution of this differential equation on $\mathbb{T}^1$ is given by: 
	\begin{lemma}\label{bsolutionconstant}
			Let $\lambda \in \C$ and consider the equation
		\begin{equation}\label{bequationconstant}
			\frac{d}{dt}y(t)+\lambda y(t)=h(t),
		\end{equation}
		where $h \in C^\infty(\mathbb{T}^1)$.
		
		If $\lambda \notin i\Z$ then the equation \eqref{bequationconstant} has a unique solution that can be expressed by
		\begin{equation*}\label{bsolutionminus}
			y(t)=\frac{1}{1-e^{-2\pi\lambda}} \int_0^{2\pi} e^{-\lambda s} h(t-s)\, \mathrm{d}s,
		\end{equation*}
		or equivalently,
		\begin{equation*}\label{bsolutionplus}
			y(t)=\frac{1}{e^{2\pi\lambda}-1} \int_0^{2\pi} e^{\lambda r} h(t+r)\, dr.
		\end{equation*}
		
		If $\lambda \in i\Z$ and $\int_0^{2\pi} e^{\lambda s}f(s)\, \mathrm{d}s=0$ then we have that
		\begin{equation*}\label{bsolutionzero}
			y(t)=e^{-\lambda t}\int_0^t e^{\lambda s}h(s) \, \mathrm{d}s
		\end{equation*}
		is a solution of the equation \eqref{bequationconstant}.
	\end{lemma}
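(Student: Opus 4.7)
The plan is to use the standard integrating factor $e^{\lambda t}$ and then impose $2\pi$-periodicity to pin down the initial condition. Multiplying \eqref{bequationconstant} by $e^{\lambda t}$ gives $\frac{d}{dt}\bigl(e^{\lambda t} y(t)\bigr) = e^{\lambda t} h(t)$, so every solution has the form
\begin{equation*}
y(t) = e^{-\lambda t}\left[\,y(0) + \int_0^t e^{\lambda s} h(s)\,ds\,\right].
\end{equation*}
Periodicity $y(2\pi) = y(0)$ then yields the compatibility identity $(e^{2\pi\lambda}-1)\, y(0) = \int_0^{2\pi} e^{\lambda s} h(s)\,ds$.

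If $\lambda \notin i\mathbb{Z}$, then $e^{2\pi\lambda}\neq 1$, so $y(0)$ is uniquely determined and hence $y$ is unique. To recast this as \eqref{bsolutionplus}, I would substitute $r = s-t$ in $\int_0^{2\pi} e^{\lambda r} h(t+r)\,dr$, split the resulting interval $[t,t+2\pi]$ at $2\pi$, and use the $2\pi$-periodicity of $h$ to rewrite the integral over $[2\pi,t+2\pi]$ as $e^{2\pi\lambda}\int_0^t e^{\lambda s} h(s)\,ds$. Combining the pieces and multiplying by $(e^{2\pi\lambda}-1)^{-1}$ reproduces exactly the expression $e^{-\lambda t}\bigl[y(0) + \int_0^t e^{\lambda s} h(s)\,ds\bigr]$. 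The formula \eqref{bsolutionminus} follows either by the analogous substitution $r = t-s$ (again using periodicity of $h$ to handle the portion that falls outside $[0,2\pi]$), or simply by noting $\frac{1}{e^{2\pi\lambda}-1} = \frac{e^{-2\pi\lambda}}{1-e^{-2\pi\lambda}}$ and rearranging the formula obtained for \eqref{bsolutionplus}.

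If $\lambda \in i\mathbb{Z}$, then $e^{2\pi\lambda}=1$ and the compatibility identity becomes exactly the hypothesis $\int_0^{2\pi} e^{\lambda s} h(s)\,ds = 0$, so solutions exist (but are no longer unique, since adding a constant multiple of $e^{-\lambda t}$ preserves the equation and periodicity). Taking the free constant $y(0)=0$ in the general representation above yields precisely \eqref{bsolutionzero}; a direct differentiation confirms it solves \eqref{bequationconstant}, and its periodicity is the statement $y(2\pi)=e^{-2\pi\lambda}\int_0^{2\pi} e^{\lambda s} h(s)\,ds = 0 = y(0)$.

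\textbf{Main obstacle.} The only non-mechanical step is verifying that the two apparently different formulas \eqref{bsolutionminus} and \eqref{bsolutionplus} indeed coincide with the solution produced by the integrating factor; this is pure bookkeeping with the change of variables and the periodicity of $h$, but it is the one place where an algebraic slip (e.g.\ a sign in the exponent or a misplaced factor of $e^{2\pi\lambda}$) would go unnoticed, so I would carry out the substitutions explicitly and double-check by evaluating both sides at $t=0$.
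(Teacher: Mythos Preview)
Your argument is correct and is exactly the standard route to this lemma: integrating factor, periodicity to determine the initial value, and changes of variable to pass between the two integral representations. There is nothing to compare against, however, because the paper states Lemma~\ref{bsolutionconstant} without proof and immediately applies it; your write-up would serve perfectly well as the missing justification.
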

	It follows from Lemma \ref{bsolutionconstant} that \eqref{systemode} has a unique solution given by
	\begin{equation*}
		v(t,\eta)_{rs}= \frac{1}{1-e^{-2\pi i (\mu_r(\eta)c_0-iq)}} \int_{0}^{2\pi} e^{-i(\mu_r(\eta)c_0-iq) \tau} g(t-\tau,\eta)_{rs}\mathrm{d}\tau,
	\end{equation*}
	whenever $\mu_r(\eta)c_0-iq \notin \Z$, or equivalently by
	\begin{equation*}
		v(t,\eta)_{rs}= \frac{1}{e^{2\pi i( \mu_r(\eta)c_0-iq)}-1} \int_{0}^{2\pi} e^{i(\mu_r(\eta)c_0-iq) \tau} g(t+\tau,\eta)_{rs}\mathrm{d}\tau.
	\end{equation*} 
	Therefore, we obtain
	\begin{equation}\label{solution2}
		\widehat{u}(t,\eta)_{rs}=\frac{1}{1-e^{-2\pi i( \mu_r(\eta)c_0-iq)}} \int_{0}^{2\pi} e^{-q\tau}e^{-i\mu_r(\eta)(c_0\tau -C(t-\tau)+C(t))} \widehat{f}(t-\tau,\eta)_{rs}\mathrm{d}\tau,
	\end{equation}
	or equivalently,
	\begin{equation}\label{solutionplus}
		\widehat{u}(t,\eta)_{rs}=\frac{1}{e^{2\pi i (\mu_r(\eta)c_0-iq)}-1} \int_{0}^{2\pi} e^{q\tau} e^{-i\mu_r(\eta)(-c_0\tau -C(t+\tau)+C(t))} \widehat{f}(t+\tau,\eta)_{rs}\mathrm{d}\tau.
	\end{equation}
	
To study the global properties of the operator $L$ we have to control the behavior of the numerical sequence 
	that precedes the integral in the expression above. Thus, we will use the following technical 
	lemma which proof is similar to the toroidal case studied in \cite{AGKM18}.
	
	\begin{lemma}\label{expestimate}
		Are equivalent:
		\begin{description}
			\item[1.] There exist $C,M>0$ such that
			\begin{equation*}
				|k+c_0\mu_r(\eta)-iq| \geq C(|k|+\jp{\eta})^{-M},
			\end{equation*}
			for all $k\in \Z$, $[\eta]\in \widehat{G}$, $1 \leq r \leq d_\eta$, whenever $k+c_0\mu_r(\eta)-iq\neq0$.
			\item[2.] There exist $C,M>0$ such that
			\begin{equation*}
				\left|1-e^{\pm 2\pi i (c_0\mu_r(\eta)-iq)}\right|\geq C\jp{\eta}^{-M},
			\end{equation*}
			for all $[\eta]\in \widehat{G}$, $1 \leq r \leq d_\eta$, whenever $c_0\mu_r(\eta)-iq\notin \Z$.
		\end{description}
	\end{lemma}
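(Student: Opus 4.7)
The plan is to reduce both (1) and (2) to estimates on a single auxiliary complex number $\delta$ lying in a vertical strip, where $|\delta|$ and $|1-e^{\pm 2\pi i\delta}|$ can be compared elementarily. Fix $[\eta]\in\widehat{G}$ and $1\leq r\leq d_\eta$, and set $z:=c_0\mu_r(\eta)-iq$. Since $\mu_r(\eta)\in \R$ and $|\mu_r(\eta)|\leq C\jp{\eta}$ by \eqref{controleigen}, the real part $\operatorname{Re}(z)=\operatorname{Re}(c_0)\mu_r(\eta)+\operatorname{Im}(q)$ satisfies $|\operatorname{Re}(z)|\leq C_1\jp{\eta}$. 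Choose $k_0=k_0(r,\eta)\in\Z$ closest to $-\operatorname{Re}(z)$; then $|k_0|\leq C_1\jp{\eta}+1$, the number $\delta:=z+k_0$ satisfies $|\operatorname{Re}(\delta)|\leq 1/2$, and $e^{\pm 2\pi i z}=e^{\pm 2\pi i\delta}$ because $e^{2\pi i k_0}=1$. Condition 2 thus becomes a lower bound on $|1-e^{\pm 2\pi i\delta}|$, while the instance $k=k_0$ of condition 1 is a lower bound on $|\delta|$.

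The key elementary fact, valid uniformly for every $\delta\in \C$ with $|\operatorname{Re}(\delta)|\leq 1/2$, is the two-sided comparison
\begin{equation*}
c\,\min(|\delta|,1)\leq |1-e^{\pm 2\pi i\delta}|,\qquad |1-e^{\pm 2\pi i\delta}|\leq C'|\delta|\ \text{ whenever }\ |\delta|\leq 1,
\end{equation*}
for absolute constants $c,C'>0$. Both bounds follow from the identity
\begin{equation*}
|1-e^{\pm 2\pi i\delta}|^2=(1-e^{\mp 2\pi\operatorname{Im}(\delta)})^2+4\,e^{\mp 2\pi\operatorname{Im}(\delta)}\sin^2(\pi\operatorname{Re}(\delta)),
\end{equation*}
combined with the Taylor expansion of $1-e^w$ around $w=0$ to handle the regime $|\delta|\leq 1$; for $|\delta|\geq 1$ in the strip, the right-hand side of the identity has no zeros (the only zero in the strip is at $\delta=0$) and its behaviour as $|\operatorname{Im}(\delta)|\to\infty$ is either bounded below by a positive limit or tends to $+\infty$, so the lower bound holds uniformly on that region as well.

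For $(1)\Rightarrow(2)$, I apply condition 1 to $k=k_0$: since $|k_0|+\jp{\eta}\leq C_2\jp{\eta}$, we have $|\delta|\geq C\jp{\eta}^{-M}$, and the lower elementary estimate gives $|1-e^{\pm 2\pi i\delta}|\geq c\min(C\jp{\eta}^{-M},1)\geq c'\jp{\eta}^{-M}$. For $(2)\Rightarrow(1)$, I take $k\in\Z$ with $k+z\neq 0$ and split cases. If $k=k_0$, then either $|\delta|\geq 1$ (so $|k+z|=|\delta|\geq 1$ dominates any negative power of $|k|+\jp{\eta}$), or $|\delta|<1$ and the upper elementary estimate yields $|\delta|\geq (C')^{-1}|1-e^{\pm 2\pi i\delta}|\geq C''\jp{\eta}^{-M}$. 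If $k\neq k_0$, then $|k-k_0|\geq 1$ combined with $|\operatorname{Re}(\delta)|\leq 1/2$ gives $|k+z|\geq |(k-k_0)+\operatorname{Re}(\delta)|\geq 1/2$, again a trivial bound.

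The main technical point I expect to require care is the uniform two-sided comparison between $|\delta|$ and $|1-e^{\pm 2\pi i\delta}|$ across the non-compact strip $|\operatorname{Re}(\delta)|\leq 1/2$; once that is established, the remainder is direct bookkeeping with the triangle inequality and the growth control from \eqref{controleigen}.
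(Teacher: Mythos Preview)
Your argument is correct. The paper does not actually prove this lemma; it simply states that the proof ``is similar to the toroidal case studied in \cite{AGKM18}'', so there is nothing to compare against in the text itself. Your approach---reducing to the closest integer $k_0$ so that $\delta=z+k_0$ lies in the strip $|\operatorname{Re}\delta|\le 1/2$, and then invoking the elementary two-sided comparison between $|\delta|$ and $|1-e^{\pm 2\pi i\delta}|$ on that strip---is the standard one for this type of Diophantine equivalence and is precisely what the referenced toroidal argument does. One small bookkeeping point worth making explicit when you write it up: in the step $(2)\Rightarrow(1)$ with $k=k_0$, you use condition~2, which requires $z\notin\Z$; this follows because $k_0+z\neq 0$ together with $|\operatorname{Re}\delta|\le 1/2$ forces $\delta\notin\Z$, hence $z\notin\Z$.
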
 

	\section{Necessary Conditions}\label{Sec-4}
	
	In this section, we will prove the necessary conditions stated in Theorem \ref{thmbsum} about the global hypoellipticity of the operator $L$ \eqref{op-L}. First, let us prove that the global hypoellipticity of the corresponding constant coefficient operator $L_0$ is a necessary condition.
	
	\begin{prop}\label{implicationGH}
		If $L$ is globally hypoelliptic, then $L_{0}$ is globally hypoelliptic.
	\end{prop}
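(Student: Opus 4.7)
The plan is to prove the contrapositive: assuming $L_{0}$ is not globally hypoelliptic, I will construct $u\in\mathcal{D}'(\mathbb{T}^1\times G)\setminus C^\infty(\mathbb{T}^1\times G)$ with $Lu\in C^\infty(\mathbb{T}^1\times G)$, contradicting the global hypoellipticity of $L$. By Theorem~\ref{GH-KMR}, failure of global hypoellipticity of $L_{0}$ splits into two cases: either the set $\mathcal{N}$ is infinite, or the Diophantine lower bound \eqref{hypothesis} fails for $L_{0}$. A short elementary argument shows that in either case one can extract a sequence $\{(k_j,[\eta_j],r_j)\}$ with $\langle\eta_j\rangle\to\infty$ and, after passing to a subsequence, $\mu_{r_j}(\eta_j)\to+\infty$.

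The key observation is the following eigenfunction identity: since $X\eta_{sr}(x)=i\mu_r(\eta)\eta_{sr}(x)$ by \eqref{choose} and $C'(t)=c(t)-c_0$, a direct computation yields, for every $k\in\mathbb{Z}$, $[\eta]\in\widehat{G}$, and $1\leq s,r\leq d_\eta$,
\[
L\bigl(e^{ikt}\,e^{-i\mu_r(\eta)C(t)}\,\eta_{sr}(x)\bigr)=i\bigl(k+c_0\mu_r(\eta)-iq\bigr)\,e^{ikt}\,e^{-i\mu_r(\eta)C(t)}\,\eta_{sr}(x),
\]
so these twisted exponentials are eigenfunctions of $L$ whose eigenvalues are exactly the full symbol of $L_{0}$.

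Since $a(t)\equiv a_0$ after the reduction preceding \eqref{op-L}, we have $C(t)=iB(t)$, where $B$ is the real, $2\pi$-periodic, zero-mean primitive of $b-b_0$; because $b$ is not constant, $M:=\max_{t\in\mathbb{T}^1}B(t)>0$. With any fixed choice of second indices $s_j$, I set
\[
u(t,x):=\sum_j d_{\eta_j}\,e^{-\mu_{r_j}(\eta_j)M}\,e^{ik_j t}\,e^{-i\mu_{r_j}(\eta_j)C(t)}\,\eta_j(x)_{s_j r_j}.
\]
The only nonzero partial Fourier coefficients of $u$ are $\widehat u(t,\eta_j)_{r_j s_j}=e^{-\mu_{r_j}(\eta_j)M}e^{ik_j t}e^{-i\mu_{r_j}(\eta_j)C(t)}$, whose moduli equal $e^{\mu_{r_j}(\eta_j)(B(t)-M)}\leq 1$; hence $u\in\mathcal{D}'(\mathbb{T}^1\times G)$ by Theorem~\ref{caracdist} with $K=0$, while at any $t^*$ with $B(t^*)=M$ this modulus equals $1$ and does not decay in $\langle\eta_j\rangle$, so $u\notin C^\infty(\mathbb{T}^1\times G)$ by Theorem~\ref{caracsmooth}. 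By the eigenfunction identity, $\widehat{Lu}(t,\eta_j)_{r_j s_j}=i(k_j+c_0\mu_{r_j}(\eta_j)-iq)\,\widehat u(t,\eta_j)_{r_j s_j}$: this vanishes identically in the first case, and in the second case every $\partial_t^\beta\widehat{Lu}(t,\eta_j)_{r_j s_j}$ is controlled by $|k_j+c_0\mu_{r_j}(\eta_j)-iq|$ times a polynomial in $\langle\eta_j\rangle$ (each $\partial_t$ brings down a factor $ik_j-i\mu_{r_j}(\eta_j)(c(t)-c_0)$ of size $O(\langle\eta_j\rangle)$), which is super-polynomially small in $\langle\eta_j\rangle$ by the choice of the witnessing sequence. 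Theorem~\ref{caracsmooth} then gives $Lu\in C^\infty(\mathbb{T}^1\times G)$.

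The main obstacle is the exponential $e^{-i\mu_{r_j}(\eta_j)C(t)}=e^{\mu_{r_j}(\eta_j)B(t)}$, which grows like $e^{\mu_{r_j}(\eta_j)M}$ as $\eta_j\to\infty$; this is precisely why the formal conjugation $\Psi_c$ mentioned in Section~\ref{Sec-3} fails to be an automorphism of $C^\infty$ or $\mathcal{D}'$ and why the variable-coefficient situation is genuinely different from the constant-coefficient one. The trick is to cancel this growth exactly with the prefactor $e^{-\mu_{r_j}(\eta_j)M}$, placing $u$ at the boundary of the distribution-growth condition of Theorem~\ref{caracdist} while still forcing failure of rapid decay at the maxima of $B$.
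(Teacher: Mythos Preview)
Your approach via the twisted eigenfunction identity is correct in spirit and, in Case~(ii), considerably more direct than the paper's: the paper reformulates the failure of \eqref{hypothesis} through Lemma~\ref{expestimate}, builds a smooth cut-off $\varphi$ supported away from a limit point $t_0$, defines $f$ with the factor $(1-e^{-2\pi i(c_0\mu_1(\eta_k)-iq)})$, and then recovers $u$ through the integral solution formula~\eqref{solution2}, carefully splitting the integral and using periodicity. Your construction skips all of this by using the integer $k_j$ from the Diophantine witness directly, so that $\widehat{Lu}(t,\eta_j)_{r_js_j}=i\gamma_j\,\widehat u(t,\eta_j)_{r_js_j}$ with $|\gamma_j|\le\langle\eta_j\rangle^{-j}$; since $|k_j|\le|\gamma_j|+|c_0\mu_{r_j}(\eta_j)|+|q|\le C\langle\eta_j\rangle$, Fa\`a di Bruno gives $|\partial_t^\beta\widehat u|\le C_\beta\langle\eta_j\rangle^\beta$ and hence $|\partial_t^\beta\widehat{Lu}|\le C_\beta\langle\eta_j\rangle^{\beta-j}$, which is what Theorem~\ref{caracsmooth} needs. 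In Case~(i) your construction and the paper's are essentially the same (both produce functions in $\ker L$), only written in different coordinates.

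There is, however, one genuine gap. The assertion that ``after passing to a subsequence, $\mu_{r_j}(\eta_j)\to+\infty$'' is false in general. For instance, if $b_0\neq 0$, then in Case~(i) the imaginary part of $k_j+c_0\mu_{r_j}(\eta_j)-iq=0$ forces $\mu_{r_j}(\eta_j)=\operatorname{Re}(q)/b_0$ for every $j$, a fixed real number that may well be negative; in Case~(ii) the same imaginary-part constraint forces $\mu_{r_j}(\eta_j)\to\operatorname{Re}(q)/b_0$, again bounded. Your bound $e^{\mu_{r_j}(\eta_j)(B(t)-M)}\le 1$ uses $\mu_{r_j}(\eta_j)\ge 0$, which is therefore not guaranteed.

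The fix is routine: pass to a subsequence along which the $\mu_{r_j}(\eta_j)$ are all $\ge 0$ or all $\le 0$ (this is always possible). In the first case your argument stands verbatim; in the second, replace $M=\max_{\mathbb{T}^1}B$ by $m=\min_{\mathbb{T}^1}B<0$ in the normalizing factor, so that $|\widehat u(t,\eta_j)_{r_js_j}|=e^{\mu_{r_j}(\eta_j)(B(t)-m)}\le 1$ with equality at a minimizer of $B$. This is exactly the role played by the paper's adaptive normalization $m_k=\max_t\int_0^t(\operatorname{Re}q-\mu_1(\eta_k)b(s))\,\mathrm{d}s$ in its Case~(i), which handles both signs at once. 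With this correction, your proof is complete and cleaner than the original in the Diophantine case.
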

	\begin{proof}
		Assume that $L_{0}$ is not globally hypoelliptic. By Theorem \ref{GH-KMR} we have two cases to consider:
		\begin{enumerate}[(i)]
			\item The set \begin{equation*}
				\mathcal{N}=\{(k,[\eta]) \in \Z\times \widehat{G}; k+c_0\mu_{r}(\eta)-iq=0, \textrm{ for some } 1 \leq r \leq d_\eta\}
			\end{equation*}
			has infinitely many elements or;
			\item for all $M>0$, there exists $k_M\in \Z$ and $[\eta_M] \in \widehat{G}$ satisfying
			$$
			0< |k_M+c_0\mu_r(\eta_M)-iq| \leq (|k|+\jp{\eta_M})^{-M},
			$$
			for some $1 \leq r \leq d_{\eta_M}$.
		\end{enumerate}
		
		\noindent {\bf Case (i):} Assume that there exists a sequence $[\eta_k] \in \widehat{G}$ such that $c_0\mu_{r}(\eta_k)-iq \in \Z$, for some $1 \leq r \leq d_{\eta_k}$. for all $k\in\N$. We may assume without loss of generality that $r=1$ for all $[\eta_k]\in \widehat{G}$. For each $k\in\N$, let $t_k \in [0,2\pi]$ such that
		$$
		m_k:=\max_{t\in[0,2\pi]} \int_{0}^t (\operatorname{Re}{q}-\mu_1(\eta_k)b(s)) \, \mathrm{d}s =  \int_{0}^{t_k} (\operatorname{Re}{q}-\mu_1(\eta_k)b(s)) \, \mathrm{d}s.
		$$
		Set
		$$
		\widehat{u}(t,\eta)_{rs} = \left\{
		\begin{array}{ll}
			e^{m_k}\exp\left\{-\int_0^t(i\mu_1(\eta_k)c(s)+q)\,\mathrm{d}s\right\}, &  \mbox{if $[\eta]=[\eta_k]$ and  $r=s=1$,} \\
			0, & \mbox{otherwise. }
		\end{array}
		\right.
		$$
		Since $c_0\mu_{1}(\eta_k)-iq \in \Z$, for all $k\in\N$, the sequence of functions $\{\widehat{u}(t,\eta)_{rs}\}$ is well-defined on $\mathbb{T}^1$. Notice that
		$$
		|\widehat{u}(t,\eta_k)_{11}| = \left| e^{m_k} \exp\left\{ - \int_0^t (\operatorname{Re}{q}-\mu_1(\eta_k)b(s)) \, \mathrm{d}s\right\}\right| \leq 1,
		$$
		by the definition of $m_k$. By Theorem \ref{caracdist}, we have that $u \in \mathcal{D}'(\mathbb{T}^1\times G)$. Moreover, we have
		$$
		|\widehat{u}(t_k,\eta_k)_{11}| = 1, 
		$$
		for all $k\in\N$. By Theorem \ref{caracsmooth} we conclude $u \notin C^\infty(\mathbb{T}^1\times G)$. Since each element of the sequence $\{\widehat{u}(t,\eta)_{rs}\}$ satisfies
		$$
		\partial_t\{\widehat{u}(t,\eta)_{rs}\}+i(\mu_r(\eta)c(t)-iq)\{\widehat{u}(t,\eta)_{rs}\}=0,
		$$
		for all $[\eta]\in\widehat{G}$, $1 \leq r,s\leq d_\eta$, we conclude that $L u=0$, which implies that $L$ is not globally hypoelliptic.
		
		\noindent {\bf Case (ii):} By the equivalence given in Lemma \ref{expestimate}, we can construct a sequence $[\eta_k]$ satisfying for all $k\in\N$
		\begin{equation}\label{estexp}
			0 < |1-e^{-2\pi i(c_0\mu_r(\eta_k)-iq)}| < \jp{\eta_k}^{-k},
		\end{equation}
		for some $1 \leq r \leq d_{\eta_k}$. We may assume $r=1$ for convenience of notation and $c_0\mu_1(\eta_k)-iq \notin \Z$ for all $k \in \Z$, because $\mathcal{N}$ is finite.
		
		For each $k\in\N$, choose $t_k\in[0,2\pi]$ such that
		$$
		\max_{t\in[0,2\pi]} \int_0^t (\mu_1(\eta_k)b(s)-\operatorname{Re}{q}) \, \mathrm{d}s =  \int_0^{t_k} (\mu_1(\eta_k)b(s)-\operatorname{Re}{q}) \, \mathrm{d}s.
		$$
		Notice that with this choice we have
		\begin{equation}\label{t_k}
			\int_{t_k}^t (\mu_1(\eta_k)b(s)-\operatorname{Re}{q}) \, \mathrm{d}s \leq 0, \quad \mbox{for all } t \in [0,2\pi].
		\end{equation}
		By the compactness of the torus, we may assume, by passing to a subsequence, that there exists $t_0 \in [0,2\pi]$ such that $t_k \to t_0$, as $k \to \infty$.
		
		Let $\varphi \in C^\infty(\mathbb{T}^1)$ be a real-valued smooth function satisfying $\mbox{supp} (\varphi) \subseteq I$, $0 \leq \varphi(t) \leq 1$, and $\int_0^{2\pi} \varphi(s)\, \mathrm{d}s >0$, where $I$ is a closed interval in $(0,2\pi)$ such that $t_0 \notin I$. 
		
		Consider
		$$
		\widehat{f}(t,\eta)_{rs}= \left\{\begin{array}{ll}(1-e^{-2\pi i(c_0\mu_1(\eta_k)-iq)}) \exp\left\{-i\int_{t_k}^t (\mu_1(\eta_k)c(w)-iq)\, \mathrm{d}w \right\} \varphi(t), &\mbox{if $[\eta]=[\eta_k]$, $r=s=1$}, \\
			0, & \mbox{otherwise,}
		\end{array}
		\right.
		$$
		for $t\in[0,2\pi]$. Since $\mbox{supp}(\varphi) \subseteq I$, the sequence $\{\widehat{f}(t,\eta)_{rs}\}$ is well-defined on $\mathbb{T}^1$. Let us show that $\{\widehat{f}(t,\eta)_{rs}\}$  defines a smooth function on $\mathbb{T}^1\times G$. For $\alpha \in \N$ we have
		{\small$$
			\left|\partial_t^\alpha \widehat{f}(t,\eta_k)_{11}\right|\!=\!\left| 1-e^{-2\pi i(c_0\mu_1(\eta_k)-iq)}\right|\! \left| \sum_{ \beta \leq \alpha} \binom{\alpha}{\beta} \partial^\beta_ t\exp\left\{-i\int_{t_k}^t (\mu_1(\eta_k)c(w)-iq)\, \mathrm{d}w \right\}  \partial^{\alpha-\beta}\varphi(t)  \right|.
			$$}
		By Fa\`a di Bruno's formula we have
		\begin{align*}
			\partial^\beta_ t\exp\left\{-i\int_{t_k}^t (\mu_1(\eta_k)c(w)-iq)\, \mathrm{d}w \right\} &= \sum_{\gamma \in \Delta(\beta)} \frac{\beta! }{\gamma!}\exp\left\{-i\int_{t_k}^t (\mu_1(\eta_k)c(w)-iq)\, \mathrm{d}w \right\} \\ & \quad  \times \prod_{j=1}^{\beta} \left(\frac{-i\partial_t^j\int_{t_k}^t (\mu_1(\eta_k)c(w)-iq)\, \mathrm{d}w}{j!}\right)^{\gamma_j},
		\end{align*}
		where $\Delta(\beta) = \bigg\{\gamma \in \N_0^{\beta}; \ \sum\limits_{j=1}^\beta j\gamma_j= \beta \bigg\}$. 
		
		Since for all $k\in\N$ we have $|\mu_1(\eta_k)| \leq \jp{\eta_k}$ and $
		\int_{t_k}^t (\operatorname{Re}{q}-\mu_1(\eta_k)b(w)) \, \mathrm{d}w \leq 0$, for all $t \in [0,2\pi]$, we obtain
		$$
		\partial^\beta_ t\exp\left\{-i\int_{t_k}^t (\mu_1(\eta_k)c(w)-iq)\, \mathrm{d}w \right\} \leq C_\beta \jp{\eta_k}^\beta,
		$$
		for some $C_\beta >0$. By \eqref{estexp} we obtain
		$$
		\left|\partial_t^\alpha \widehat{f}(t,\eta_k)_{11}\right|\leq C_\alpha \jp{\eta_k}^{\alpha-k},
		$$
		for some $C_\alpha >0$. By Theorem \ref{caracsmooth} we conclude that $f\in C^\infty(\mathbb{T}^1\times G)$. 
		
		Let us construct now a distribution $u\in \mathcal{D}'(\mathbb{T}^1\times G)\setminus C^\infty(\mathbb{T}^1\times G)$ satisfying $Lu=f$.
		By \eqref{solution2}, set
		$$
		\widehat{u}(t,\eta_k)_{11} = \frac{1}{1-e^{-2\pi i(c_0\mu_1(\eta_k)-iq)}}\int_0^{2\pi}\exp\left\{-i\int_{t-\tau}^t(c(w)\mu_1(\eta_k)-iq)\, \mathrm{d}w \right\} \widehat{f}(t-\tau,\eta_k)_{11} \, \mathrm{d}\tau
		$$
		and $\widehat{u}(t,\eta)_{rs}=0$ for all the other cases. 	Notice that for $t-\tau <0$ we need to use the $2\pi$--periodic extension of $f$ on de definition of $\widehat{u}(t,\eta_k)_{11}$. Hence,
		 \begin{align}
				\widehat{u}(t,\eta_k)_{11} &= \frac{1}{1-e^{-2\pi i(c_0\mu_1(\eta_k)-iq)}}\int_0^{2\pi}\exp\left\{-i\int_{t-\tau}^t(c(w)\mu_1(\eta_k)-iq)\, \mathrm{d}w \right\} \widehat{f}(t-\tau,\eta_k)_{11} \, \mathrm{d}\tau \nonumber \\
				 &= \int_0^{t} \exp\left\{ -i\int_{t-\tau}^t(c(w)\mu_1(\eta_k)-iq)\, \mathrm{d}w-i\int_{t_k}^{t-\tau} (\mu_1(\eta_k)c(w)-iq)\, \mathrm{d}w \right\} \varphi(t-\tau)\, \mathrm{d}\tau\nonumber \\
				  &+ \int_t^{2\pi} \exp\left\{ -i\int_{t-\tau}^t(c(w)\mu_1(\eta_k)-iq)\, \mathrm{d}w-i\int_{t_k}^{t-\tau+2\pi} (\mu_1(\eta_k)c(w)-iq)\, \mathrm{d}w \right\} \varphi(t-\tau+2\pi)\, \mathrm{d}\tau\nonumber\\
				  &=\int_0^{t} \exp\left\{ -i\int_{t_k}^t(c(w)\mu_1(\eta_k)-iq)\, \mathrm{d}w\right\} \varphi(t-\tau)\, \mathrm{d}\tau\label{solution} \\
				  &+ \int_t^{2\pi} \exp\left\{ -i\int_{t_k}^t(c(w)\mu_1(\eta_k)-iq)\, \mathrm{d}w-i2\pi(\mu_1(\eta_k)c_0-iq)\right\} \varphi(t-\tau+2\pi)\, \mathrm{d}\tau \nonumber
			\end{align}
		We have $0\leq \varphi(t)\leq1$, so by \eqref{t_k} we obtain
		\begin{align*}
			|\widehat{u}(t,\eta_k)_{11}| &=\left|\int_0^{t} \exp\left\{ -i\int_{t_k}^t(c(w)\mu_1(\eta_k)-iq)\, \mathrm{d}w\right\} \varphi(t-\tau)\, \mathrm{d}\tau\right.\nonumber \\
			&+ \int_t^{2\pi} \left.\exp\left\{ -i\int_{t_k}^t(c(w)\mu_1(\eta_k)-iq)\, \mathrm{d}w-i2\pi(\mu_1(\eta_k)c_0-iq)\right\} \varphi(t-\tau+2\pi)\, \mathrm{d}\tau \right|\nonumber\\
			&\leq \int_0^{t} \exp\left\{	\int_{t_k}^t (\mu_1(\eta_k)b(s)-\operatorname{Re}{q})\right\} |\varphi(t-\tau)|\, \mathrm{d}\tau\nonumber \\
			&+ \int_t^{2\pi} \exp\left\{\int_{t_k}^t (\mu_1(\eta_k)b(s)+\operatorname{Re}{q})-2\pi (\operatorname{Re}{q}-b_0\mu_1(\eta_k))\right\} |\varphi(t-\tau+2\pi)|\, \mathrm{d}\tau\nonumber\\
			&\leq t +(2\pi-t) \exp\left\{2\pi (\operatorname{Re}{q}-b_0\mu_1(\eta_k)) \right\} \leq 4\pi
		\end{align*}
		for sufficiently large $k$, where the last inequality comes from the fact that by the relation  \eqref{estexp} we have $|e^{-2\pi i(c_0\mu_1(\eta_k)-iq)}| \to 1$, when $k \to\infty$.
		By Theorem \ref{caracdist}, we have $u \in \mathcal{D}'(\mathbb{T}^1\times G)$.
		
		Notice that if $t_0 > \sup I$, then $t_k > \sup I$, for $k$ sufficiently large, which implies that $t_k \geq \tau$, for every $\tau \in \textrm{supp}(\varphi)$.  By \eqref{solution} we obtain
		$$
		|\widehat{u}(t_k,\eta_k)_{11}| = \int_{0}^{2\pi} \varphi(t_k-\tau) \, \mathrm{d}\tau = \|\varphi\|_{L^1(\mathbb{T}^1)} >0.
		$$
		On the other hand, if $t_0 < \inf I$, we have for $k$ sufficiently large. that $t_k < \tau$, for every $\tau \in \textrm{supp}(\varphi)$. By \eqref{solution} we have
		$$
		|\widehat{u}(t_k,\eta_k)_{11}| = \exp\left\{2\pi (\operatorname{Re}{q}-b_0\mu_1(\eta_k)) \right\}  \int_{0}^{2\pi} \varphi(t_k-\tau+2\pi) \, \mathrm{d}\tau > \tfrac{1}{2}\|\varphi\|_{L^1(\mathbb{T}^1)} >0.
		$$
		By Theorem \ref{caracsmooth} we conclude that $u\notin C^\infty(\mathbb{T}^1\times G)$. Therefore $L$ is not globally hypoelliptic.
	\end{proof}

	\begin{thm} \label{nec} Assume that $b\not\equiv0$. If $L=\partial_t+(a_0+ib(t))X+q$ is globally hypoelliptic then $b$ does not change sign.
	\end{thm}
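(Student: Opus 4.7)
The plan is to prove the contrapositive: assuming $b$ changes sign, I construct $f \in C^\infty(\mathbb{T}^1 \times G)$ and $u \in \mathcal{D}'(\mathbb{T}^1 \times G) \setminus C^\infty(\mathbb{T}^1 \times G)$ with $Lu = f$, contradicting the global hypoellipticity of $L$. The approach is modelled on Hounie's classical torus argument and on Case (ii) of the proof of Proposition \ref{implicationGH}, exploiting the spectral growth \eqref{growth} to extract $[\eta_j] \subset \widehat G$ with $\mu_j := \mu_{d_{\eta_j}}(\eta_j)$ satisfying $|\mu_j| \geq C\langle\eta_j\rangle^{1/2}$ and a fixed sign (positive or negative) chosen compatibly with $b$, as detailed below; since the spectrum of $iX$ on $L^2(G)$ is unbounded in both directions for any nonzero left-invariant $X$, both sign choices are available. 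For each $j$ pick $t_j \in [0, 2\pi]$ maximising $t \mapsto \int_0^t(\mu_j b(s) - \operatorname{Re}(q))\, \mathrm{d}s$, so that $\int_{t_j}^t(\mu_j b - \operatorname{Re}(q))\, \mathrm{d}s \leq 0$ on $[0, 2\pi]$, and pass to a subsequence along which $t_j \to t^* \in \mathbb{T}^1$. Because $b$ changes sign, $\max b > 0$ and $\min b < 0$, and at least one of $\max b > |b_0|$ or $-\min b > |b_0|$ must hold; choosing $\operatorname{sgn}(\mu_j)$ accordingly yields a closed interval $I \subset \mathbb{T}^1 \setminus \{t^*\}$ and a constant $\delta > 2\pi|b_0|$ with $\int_{t_j}^t \mu_j b(s)\, \mathrm{d}s \leq -|\mu_j|\delta$ for every $t \in I$ and all $j$ large.

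Fix a nonnegative $\varphi \in C^\infty(\mathbb{T}^1)$ with $\mathrm{supp}\,\varphi \subset I$ and $\int_0^{2\pi}\varphi > 0$, placed on the side of $t_j$ for which, in the closed form for $\widehat u$ derived below, exactly one of the two integrals in the bracket is nonzero at $t = t_j$. Define
\[
\widehat f(t, \eta_j)_{d_{\eta_j}, d_{\eta_j}} := \bigl(1 - e^{-2\pi i(\mu_j c_0 - iq)}\bigr)\exp\!\left(-i\int_{t_j}^t(\mu_j c(w) - iq)\, \mathrm{d}w\right)\varphi(t),
\]
and zero on every other Fourier component. The prefactor cancels the natural monodromy of the exponential, so $\widehat f$ extends $2\pi$-periodically; on $\mathrm{supp}\,\varphi$ one has
\[
|\widehat f(t, \eta_j)| \;\leq\; \bigl(1 + e^{2\pi|\mu_j||b_0| + 2\pi|\operatorname{Re}(q)|}\bigr)\,e^{-|\mu_j|\delta + 2\pi|\operatorname{Re}(q)|}|\varphi(t)| \;\lesssim\; e^{-|\mu_j|(\delta - 2\pi|b_0|)},
\]
which decays exponentially in $|\mu_j|$ because $\delta > 2\pi|b_0|$. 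Each $t$-derivative costs at most an $O(|\mu_j|) \leq O(\langle\eta_j\rangle)$ factor, so $|\partial_t^\alpha \widehat f(t, \eta_j)|$ decays faster than every power of $\langle\eta_j\rangle$, and Theorem \ref{caracsmooth} gives $f \in C^\infty(\mathbb{T}^1 \times G)$.

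Define $\widehat u(t, \eta_j)_{d_{\eta_j}, d_{\eta_j}}$ via formula \eqref{solution2} (or \eqref{solutionplus} when the sign of $\mu_j$ dictates) and zero on all other components. Splitting the integral at $\tau = t$ and using the identities $\int_{t-\tau}^t c + \int_{t_j}^{t-\tau} c = \int_{t_j}^t c$ together with $\int_{t_j}^{t-\tau+2\pi} c = \int_{t_j}^{t-\tau} c + 2\pi c_0$ (the latter coming from the periodic extension of $\widehat f$), the exponential weight telescopes and the prefactor $1 - e^{-2\pi i(\mu_j c_0 - iq)}$ cancels against the denominator, yielding
\[
\widehat u(t, \eta_j)_{d_{\eta_j}, d_{\eta_j}} = e^{-q(t - t_j)}\,e^{-i\mu_j \int_{t_j}^t c(s)\, \mathrm{d}s}\!\left[\int_0^t \varphi(s)\, \mathrm{d}s + e^{-2\pi i(\mu_j c_0 - iq)}\int_t^{2\pi}\varphi(s)\, \mathrm{d}s\right].
\]
The same exponential balance used for $f$ shows $|\widehat u(t, \eta_j)| \leq C$ uniformly in $t, j$, so Theorem \ref{caracdist} yields $u \in \mathcal{D}'(\mathbb{T}^1 \times G)$. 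Evaluating at $t = t_j$, the two prefactors equal $1$, and by the placement of $\mathrm{supp}\,\varphi$ exactly one of the bracketed integrals vanishes while the other has absolute value either $\|\varphi\|_{L^1}$ or $|e^{-2\pi i(\mu_j c_0 - iq)}|\cdot\|\varphi\|_{L^1}$; in the latter case that exponential modulus is itself compatible with the overall exponential balance, so $|\widehat u(t_j, \eta_j)_{d_{\eta_j}, d_{\eta_j}}| \geq \kappa > 0$ along the subsequence, and Theorem \ref{caracsmooth} delivers $u \notin C^\infty(\mathbb{T}^1 \times G)$.

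The main obstacle I anticipate is securing the lower bound $\delta > 2\pi|b_0|$: since $\int_{t_j}^t b(s)\, \mathrm{d}s$ is constrained by $|t - t_j|\max|b|$ over one period, the required $\delta$ is not available from an arbitrary sign change of $b$ but only after selecting the sign of $\mu_j$ (equivalently, choosing the positive- or negative-growing subsequence of eigenvalues of $iX$) consistently with whichever of $\max b > |b_0|$ or $-\min b > |b_0|$ holds --- this is precisely what rules out the failure mode where $|b_0|$ is large and $b$ oscillates only slightly across zero. This sign choice must then be threaded through the construction (the side of $t_j$ chosen for $\mathrm{supp}\,\varphi$, and the choice between formulas \eqref{solution2} and \eqref{solutionplus}) so that the surviving integral in the bracket at $t = t_j$ does not itself cancel to zero against the monodromy phase; this bookkeeping is the most delicate quantitative step of the argument.
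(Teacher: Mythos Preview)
Your construction follows the template of Case~(ii) in the proof of Proposition~\ref{implicationGH}, but the crucial smallness of the prefactor $|1-e^{-2\pi i(\mu_j c_0-iq)}|$ that drove that argument is no longer available here: its modulus can grow like $e^{2\pi|\mu_j b_0|}$, and you attempt to absorb this by demanding $\delta>2\pi|b_0|$. The sign-choice mechanism you propose does not deliver this. Take $b(t)=1+(1+\varepsilon)\sin t$ with $\varepsilon>0$ small: then $b_0=1$, $b$ changes sign, and indeed $\max b>|b_0|$, yet the oscillation of $t\mapsto\int_0^t b$ on $[0,2\pi]$ equals exactly $2\pi b_0$, so no $\delta>2\pi|b_0|$ exists. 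Since that oscillation is independent of the sign of $\mu_j$, switching signs does not help. Even if you reparametrize $\mathbb{T}^1$ so that the maximizing point $t^*$ becomes interior, you would still need $\operatorname{supp}\varphi$ to lie on the side of $t^*$ for which the surviving bracketed term in your formula is $\|\varphi\|_{L^1}$ rather than $|e^{\mp 2\pi i(\mu_jc_0-iq)}|\|\varphi\|_{L^1}\to 0$, and that side need not coincide with the interval where $\int_{t_j}^{t}b\le -\delta$. Your final clause (``compatible with the overall exponential balance'') is precisely where the argument breaks.

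The paper circumvents this with a genuinely different mechanism. It selects the \emph{bounded} prefactor ($e^{2\pi i(\mu_jc_0-iq)}-1$ when $b_0>0$, $\mu_j>0$), puts the global damping $e^{B\mu_j}$ with $B=\min_{t,\tau}\int_t^{t+\tau}b<0$ directly into $\widehat f$, and obtains
\[
\widehat u(t,\eta_j)_{d_{\eta_j}d_{\eta_j}}=e^{-i\mu_ja_0(t-t_0)}\int_0^{2\pi}e^{q\tau}e^{\mu_j(B-\operatorname{Im}G(t,\tau))}\varphi(t+\tau)\,\mathrm{d}\tau.
\]
Since $B\le\operatorname{Im}G(t,\tau)$ everywhere with equality at $(t_0,\tau_0)$, the integrand concentrates near $\tau_0$, and a second-order Taylor (Laplace-type) estimate gives $|\widehat u(t_0,\eta_j)|\ge C\langle\eta_j\rangle^{-1/2}$ --- a \emph{polynomial} lower bound, which is still enough to exclude $u\in C^\infty$. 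No $\delta>2\pi|b_0|$ condition is needed. Your route could be salvaged by choosing the bounded prefactor from the outset, reparametrizing so that $t^*\in(0,2\pi)$, and placing $\operatorname{supp}\varphi$ immediately after $t^*$ in the direction where $b<0$; but these steps are absent from your write-up, and the ``$\max b>|b_0|$'' heuristic is not a substitute for them.
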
 
	\begin{proof}
		Suppose that $b$ change sign and $b_0 > 0$. Consider
		$$
		G(t,\tau) = \int_{t}^{t+\tau} (a_0+ib(w)) \, \mathrm{d}w = a_0\tau + i\int_t^{t+\tau} b(w) \mathrm{d}w, \quad t,\tau \in [0,2\pi]
		$$
		and define
		$$
		B = \min_{0\leq t,\tau \leq 2\pi} \operatorname{Im}{G(t,\tau)} = \operatorname{Im}{G(t_0,\tau_0)} = \int^{t_0+\tau_0}_{t_0} b(w) \, \mathrm{d}w.
		$$
		Since $b$ change sign, we have $B<0$. Moreover, we can consider $t_0, \tau_0 \in (0,2\pi)$ and $b(0)\neq 0$. It can be shown that $b(t_0+\tau_0) = 0$, which implies that $t_0+\tau_0 \in (0,2\pi)$.
		
		Let $\varphi \in C^\infty(\mathbb{T}^1)$ such that $\text{supp}(\varphi) \subset [t_0+\tau_0-\delta,t_0+\tau_0+\delta] \subset (0,t_0)$ with $\varphi(t)\equiv 1$ for $t \in [t_0+\tau_0-\delta/2, t_0+\tau_0+\delta/2]$ and $0 \leq \varphi(t) \leq 1 $.
		
		Let us construct a distribution $u \in \mathcal{D}'(\mathbb{T}^1\times G) \backslash C^\infty(\mathbb{T}^1\times G)$ such that $Lu=f\in C^\infty(\mathbb{T}^1\times G)$. From \eqref{growth}, there exist $C>0$ and a sequence $\{[\eta_j]\}_{j\in \N}$ in $\widehat{G}$ such that 
		\begin{equation}\label{grow}
			C\langle \eta_j \rangle^{\frac{1}{2}} \leq \mu_{d_{\eta_j}}(\eta_j),
		\end{equation} 
		for all $j\in \N$. By Proposition \ref{implicationGH} we have that $L_{0}$ is globally hypoelliptic. In particular, the set
		$$
		\mathcal{N}=\{[\eta]\in \widehat{G}; \mu_r(\eta)c_0-iq \in \Z, \mbox{ for some } 1\leq r\leq d_\eta\}
		$$
		is finite and we may assume that $[\eta_j]\notin \mathcal{N}$, for all $j \in \N$. Define
		$$\widehat{f}(t,\eta)_{rs} \!=\! \left\{
		\begin{array}{ll} \tilde{c}_ke^{B\mu_r(\eta_j)}\varphi(t) e^{-i\mu_r(\eta_j)a_0(t-t_0)}, & \mbox{if } [\eta]=[\eta_j], \mbox{ for some $j\in \N$, and } r=s=d_{\eta_j}, \\
			0, & \mbox{otherwise },
		\end{array}
		\right.
		$$
		where $\tilde{c}_k:=e^{2\pi i(\mu_r(\eta_j)c_0-iq)}-1$. In order to prove that the sequence $\{ \widehat{f}(t,\eta)_{rs}\}$ defines a smooth function in $\mathbb{T}^1\times G$, it is enough to consider the representations $[\eta_j]$ and the components $r=s=d_{\eta_j}$. 
		
		Notice that
		\begin{align*}
			|\partial^\alpha_t \widehat{f}(t,{\eta_j})_{d_{\eta_j} d_{\eta_j} }| \!&= \!\left|(e^{2\pi i(\mu_{d_{\eta_j}}({\eta_j})c_0-iq)}-1) e^{B\mu_{d_{\eta_j}}({\eta_j})}\sum_{\beta \leq \alpha} \binom{\alpha}{\beta} \partial_t^\beta e^{-i\mu_{d_{\eta_j}}({\eta_j})a_0(t-t_0)}\partial_t^{\alpha-\beta}\varphi(t)\right| \nonumber \\ 
			&\leq \!\left|e^{2\pi i(\mu_{d_{\eta_j}}({\eta_j})c_0-iq)}-1\right| e^{B\mu_{d_{\eta_j}}({\eta_j})}\sum_{\beta \leq \alpha} \binom{\alpha}{\beta} \left|\partial_t^\beta e^{-i\mu_{d_{\eta_j}}({\eta_j})a_0(t-t_0)}\right|\!\left|\partial_t^{\alpha-\beta}\varphi(t)\right|\!. \nonumber
		\end{align*}
		
		Observe that 
		$$
		\left|e^{2\pi i(\mu_{d_{\eta_j}}({\eta_j})c_0-iq)}-1\right| \leq \left|e^{2\pi i(\mu_{d_{\eta_j}}({\eta_j})c_0-iq)}\right| +1 \leq e^{2\pi(-\mu_{d_{\eta_j}}({\eta_j})b_0+\operatorname{Re}{q})} + 1 \leq C,
		$$
		for some $C>0$, because $b_0\geq0$ and $\mu_{d_{\eta_j}}({\eta_j}) \to \infty$. Notice that
		$$
		\left|\partial_t^\beta e^{-i\mu_{d_{\eta_j}}({\eta_j})a_0(t-t_0)}\right| = \left|(-i\mu_{d_{\eta_j}}({\eta_j})a_0)^\beta e^{-i\mu_{d_{\eta_j}}({\eta_j})a_0(t-t_0)}\right| \leq C_\beta\jp{{\eta_j}}^\beta.
		$$
		Moreover, since $B<0$, we obtain from \eqref{grow} $$e^{B\mu_{d_{\eta_j}}({\eta_j})} \leq e^{CB\jp{{\eta_j}}^{\frac{1}{2}}}.$$ 
		Hence,
		$$
		|\partial^\alpha_t \widehat{f}(t,{\eta_j})_{d_{\eta_j} d_{\eta_j} }| \leq C_\alpha e^{CB\jp{{\eta_j}}^{\frac{1}{2}}} \jp{{\eta_j}}^\alpha.
		$$
		Since $B<0$, for any $N>0$ there exists $C_{\alpha N}$ such that
		$$
		|\partial^\alpha_t \widehat{f}(t,\eta)_{d_\eta d_\eta }| \leq C_{\alpha N} \jp{\eta}^{-N}.$$
		By Theorem \ref{carac}, we conclude that $f \in C^\infty(\mathbb{T}^1\times G)$. If $Lu=f$, then
		\begin{equation*}\label{fouriereq2}
			\widehat{f}(t,\eta)_{rs}=\widehat{Lu}(t,\eta)_{rs} = \partial_t \widehat{u}(t,\eta)_{rs} + i\mu_r(\eta)(a_0+ib(t)-iq)\widehat{u}(t,\eta)_{rs},
		\end{equation*}
		for $1\leq r,s, \leq d_\eta$. Since $\mu_{d_{\eta_j}}(\eta_j)c_0-iq \notin \Z$, by \eqref{solutionplus} we obtain
		\begin{align*}
			\widehat{u}(t,{\eta_j})_{d_{\eta_j} d_{\eta_j}} &= \frac{1}{e^{2\pi i(\mu_{d_{\eta_j}}({\eta_j})c_0-iq)}-1} \int_{0}^{2\pi} e^{q\tau}e^{i\mu_{d_{\eta_j}}({\eta_j})G(t,\tau)} \widehat{f}(t+\tau,{\eta_j})_{rs} \, \mathrm{d}\tau \nonumber \\
			&= e^{-i\mu_{d_{\eta_j}}({\eta_j})a_0(t-t_0)} \int_{0}^{2\pi}e^{q\tau}e^{\mu_{d_{\eta_j}}({\eta_j})(B-\operatorname{Im}{G(t,\tau)})} \varphi(t+\tau) \, \mathrm{d}\tau.
		\end{align*}
		In all the other cases set $\widehat{u}(t,\eta)_{rs}=0$. First, let us show that the sequence $\{ \widehat{u}(t,\eta)_{rs}\}$  defines a distribution $u\in\mathcal{D}'(\mathbb{T}^1\times G)$, where
		$$
		u=\sum_{[\eta]\in \widehat{G}} d_\eta \sum_{r,s=1}^{d_\eta} \widehat{u}(t,\eta)_{rs}{\eta_{sr}}.
		$$ 
		
		In order to apply the Theorem \ref{caracdist}, it is enough to consider the case where $[\eta]=[\eta_j]$ for some $j\in \N$ and $r=s=d_{\eta_j}$ because the other cases are well-controlled. 
		
		Let $\psi \in C^\infty(\mathbb{T}^1)$, then 
		\begin{align*}
			\left|\left( \widehat{u}(t,{\eta_j})_{rs},\psi \right) \right| &= \left| \int_{0}^{2\pi}e^{-i\mu_{d_{\eta_j}}({\eta_j})a_0(t-t_0)} \int_{0}^{2\pi}e^{q\tau}e^{\mu_{d_{\eta_j}}({\eta_j})(B-\operatorname{Im}{G(t,\tau)})} \varphi(t+\tau) \, \mathrm{d}\tau \,\psi(t)\, dt\right| \nonumber \\
			&\leq \int_{0}^{2\pi}\int_{0}^{2\pi} e^{\operatorname{Re}{q} \tau}e^{\mu_{d_{\eta_j}}({\eta_j})(B-\operatorname{Im}{G(t,\tau)})} |\varphi(t+\tau)|\,|\psi(t)|\, \mathrm{d}\tau dt \nonumber\\
			&\leq (2\pi)^2 \|\varphi\|_{\infty} \|\psi\|_{\infty} \nonumber \\
			& \leq  K p_1(\psi) \langle {\eta_j} \rangle. \nonumber
		\end{align*}
		Notice that here we have used the fact that $\mu_{d_{\eta_j}}({\eta_j})(B-\operatorname{Im}{G(t,\tau)}) \leq 1$. Therefore $u \in \mathcal{D}'(\mathbb{T}^1\times G)$. Consider the function
		$$
		\theta(\tau)=B-\operatorname{Im}{G(t_0,\tau)} = B- \int_{t_0}^{t_0+\tau} b(w) \, \mathrm{d}w.
		$$
		
		We may consider $\delta$ small enough in the properties of $\varphi$  such that either $\cos(\operatorname{Im}{q})$ or $\sin(\operatorname{Im}{q}
		)$ does not change sign on $(\tau_0+\delta,\tau_0+\delta)$. Assume without loss of generality that $\sin(\operatorname{Im}{q})\geq0$ on $(\tau_0+\delta,\tau_0+\delta).$ Thus
		\begin{align*}
			|\widehat{u}(t_0,{\eta_j})_{d_{\eta_j} d_{\eta_j}} | &= \left|\int_{0}^{2\pi} e^{q\tau}e^{\mu_{d_{\eta_j}}({\eta_j}) \theta(\tau)} \varphi(t_0+\tau) \, \mathrm{d}\tau \right| \nonumber \\
			&\geq \int_{\tau_0-\delta}^{\tau_0+\delta} e^{\operatorname{Re}{q}\tau}\sin(\operatorname{Im}{q}\tau)e^{\mu_{d_{\eta_j}}({\eta_j}) \theta(\tau)} \varphi(t_0+\tau) \, \mathrm{d}\tau \nonumber \\
			&\geq \int_{\tau_0-\delta/2}^{\tau_0+\delta/2} e^{\operatorname{Re}{q}\tau}\sin(\operatorname{Im}{q}\tau)e^{\mu_{d_{\eta_j}}({\eta_j}) \theta(\tau)} \, \mathrm{d}\tau \nonumber\\
			& \geq  K\int_{\tau_0-\delta/2}^{\tau_0+\delta/2} e^{\langle {\eta_j} \rangle \theta(\tau)} \, \mathrm{d}\tau \nonumber,
		\end{align*}
		where we use the fact that $\theta(\tau) \leq 0$, for all $\tau \in [0,2\pi]$, $\mu_{d_{\eta_j}}({\eta_j}) \leq \langle {\eta_j} \rangle$ for all $[{\eta_j}] \in \widehat{G}$, and there exists $K>0$ such that $e^{\operatorname{Re}{q}\tau}\sin(\operatorname{Im}{q}\tau)\geq K$ on $[\tau_0-\delta/2,\tau_0+\delta/2]$.
		
		Let us analyze the behavior of the function   
		$$
		J({\eta_j})= \int_{\tau_0-\delta/2}^{\tau_0+\delta/2} e^{\langle {\eta_j} \rangle \theta(\tau)} \, \mathrm{d}\tau
		$$
		when $\langle {\eta_j} \rangle \to \infty$.
		We have
		$$
		\theta(\tau_0) =  B - \int_{t_0}^{t_0+\tau_0} b(w) \, \mathrm{d}w = B-B = 0
		$$
		and 
		$$
		\theta'(\tau_0) = -b(t_0+\tau_0) = 0.
		$$
		Thus by Taylor's formula, we have
		$$
		\theta(\tau_0 + h) = \theta(\tau_0) + \theta'(\tau_0)h + \frac{1}{2} \theta''(\tau_0+\theta(h))h^2=\frac{1}{2} \theta''(\tau_0+\theta(h))h^2,
		$$
		for $h \in (\tau_0-\delta/2,\tau_0+\delta/2)$ and $\theta(h) \in [\tau_0-\delta/2,\tau_0+\delta/2]$.
		Let
		$$
		M:= \sup_{\tau_0 -\delta \leq y \leq \tau_0+\delta} \left|\frac{\theta''(y)}{2}\right|.
		$$
		
		If $M=0$ then $\theta \equiv 0$ in $[\tau_0-\delta/2, \tau_0+\delta/2]$. Thus
		$$
		\int_{\tau_0-\delta/2}^{\tau_0+\delta/2} e^{\langle {\eta_j} \rangle \theta(\tau)} \, \mathrm{d}\tau = \int_{\tau_0-\delta/2}^{\tau_0+\delta/2} e^0 \, \mathrm{d}\tau = \delta \geq \frac{C_1}{\sqrt{\langle {\eta_j} \rangle}},
		$$
		for some $C_1>0$.
		
		If $M>0$, then
		$$
		-\theta(\tau_0+h) = -\frac{1}{2}\theta''(\tau_0+\theta(h))h^2 \leq Mh^2.
		$$
		So
		$$
		\langle {\eta_j} \rangle \theta(\tau_0+h) \geq -M\langle {\eta_j} \rangle h^2.
		$$
		Thereby
		$$
		\int_{\tau_0-\delta/2}^{\tau_0+\delta/2} e^{\langle {\eta_j} \rangle \theta(\tau)} \, \mathrm{d}\tau =  \int_{-\delta/2}^{\delta/2} e^{\langle {\eta_j} \rangle \theta(\tau_0+h)} \, dh \geq \int_{-\delta/2}^{\delta/2} e^{-M\langle {\eta_j} \rangle h^2} \, dh
		\geq\frac{C_2}{\sqrt{\langle {\eta_j} \rangle}},
		$$
		for some $C_2>0$. Considering $C=\max\{KC_1,KC_2 \}$, we have
		$$| \widehat{u}(t_0,{\eta_j})_{d_{\eta_j} d_{\eta_j}} | \geq \frac{C}{\sqrt{\langle {\eta_j} \rangle}},$$
		for all $[{\eta_j}]\in \widehat{G}$. Therefore $u \notin C^\infty(\mathbb{T}^1\times G)$. 
		
		The case where $b_0<0$ is analogous to the previous one, but needs some adaptions. Here we take
		$$
		\widetilde{B} := \max_{0\leq t,\tau \leq 2\pi} \operatorname{Im}{H(t,\tau)} = \operatorname{Im}{H(t_1,\tau_1)} = \int_{t_1-\tau_1}^{t_1} b(w) \, \mathrm{d}w.
		$$
		Since $b$ change sign, then $\widetilde{B}>0$. For $r=s=d_{\eta_j}$, define
		$$
		\widehat{f}(t,{\eta_j})_{rs} = (1-e^{-2\pi i(\mu_r({\eta_j})c_0-iq)}) e^{-\widetilde{B}\mu_r({\eta_j})}\widetilde{\varphi}(t) e^{-i\mu_r({\eta_j})a_0(t-t_1)},
		$$
		where $\widetilde{\varphi} \in C^{\infty}(\mathbb{T}^1)$ satisfies similar properties of $\varphi$. One can shows that $f \in C^\infty(\mathbb{T}^1\times G)$ and there exists $u \in \mathcal{D}'(\mathbb{T}^1\times G) \backslash C^\infty(\mathbb{T}^1\times G)$ such that $Lu=f$. For this, define for $r=s=d_{\eta_j}$
		$$
		\widehat{u}(t,{\eta_j})_{rs} = e^{-i\mu_r({\eta_j}) a_0(t-t_1)} \int_{0}^{2\pi} e^{\mu_r({\eta_j})(\operatorname{Im}{H(t,\tau)}-\widetilde{B})} \widetilde{\varphi}(t-\tau) \, \mathrm{d}\tau.
		$$
		The proof that $u \in \mathcal{D}'(\mathbb{T}^1\times G) \backslash C^\infty(\mathbb{T}^1\times G)$ is similar to the previous case and it will be omitted.
	\end{proof}

	\section{Sufficient Conditions}\label{Sec-5}
	
	In this section, we will prove the sufficient conditions on Theorem \ref{thmbsum} regarding the global hypoellipticity of $L$. Because of Proposition \ref{implicationGH}, from now we will assume that $L_{0}$ is global hypoelliptic. By Theorem \ref{GH-KMR}, this assumption implies that the set 
	\begin{equation*}\label{setNc4}
		\mathcal{N}=\{(k,[\eta]) \in \Z\times \widehat{G}; k+c_0\mu_{r}(\eta)-iq=0, \textrm{ for some } 1 \leq r \leq d_\eta\}
	\end{equation*}
	is finite and there exist $C,M>0$ such that
	\begin{equation*}\label{ADC1}
		|k+c_0\mu_r(\eta)-iq| \geq C(|k|+\jp{\eta})^{-M},
	\end{equation*}
	for all $k\in \Z$, $[\eta]\in \widehat{G}$, $1 \leq r \leq d_\eta$, whenever $k+c_0\mu_r(\eta)-iq\neq0$.
	
	\begin{thm}\label{suf}
		Assume that $L_{0}$ is globally hypoelliptic and $b\not\equiv 0$. If $b$ does not change sign then $L$ is globally hypoelliptic.
	\end{thm}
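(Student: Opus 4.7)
The plan is to apply Theorem \ref{caracsmooth} to the partial Fourier coefficients of $u$. Given $Lu=f\in C^\infty(\mathbb{T}^1\times G)$, equation \eqref{fouriereq} shows that for every $[\eta]\in\widehat{G}$ and every $1\le r,s\le d_\eta$ the coefficient $\widehat u(t,\eta)_{rs}$ is a distributional solution on $\mathbb{T}^1$ of a non-degenerate first-order ODE with smooth right-hand side, hence is automatically smooth in $t$. What needs to be proved is the rapid decay in $\langle\eta\rangle$, uniformly in $t$ and for every $t$-derivative. The finitely many resonant indices from the set \eqref{setNc4} contribute a smooth finite sum and can be discarded at the outset, so I assume $\mu_r(\eta)c_0-iq\notin\mathbb{Z}$, in which case both formulas \eqref{solution2} and \eqref{solutionplus} hold for $\widehat u(t,\eta)_{rs}$.

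Replacing $b$ by $-b$ if necessary, assume $b\ge 0$, so that $b_0>0$ since $b\not\equiv 0$. The strategy is to use \eqref{solutionplus} when $\mu_r(\eta)\ge 0$ and \eqref{solution2} when $\mu_r(\eta)<0$. Since $c(t)=a_0+ib(t)$, one has $C(t)=i(B(t)-b_0 t)$ with $B(t)=\int_0^t b(w)\,dw$, and a direct calculation gives
\begin{align*}
\operatorname{Re}\bigl[-i\mu_r(\eta)(-c_0\tau-C(t+\tau)+C(t))\bigr]&=-\mu_r(\eta)\!\int_t^{t+\tau}\! b(w)\,dw,\\
\operatorname{Re}\bigl[-i\mu_r(\eta)(c_0\tau-C(t-\tau)+C(t))\bigr]&=\mu_r(\eta)\!\int_{t-\tau}^t\! b(w)\,dw.
\end{align*}
Since $b\ge 0$, the first expression is $\le 0$ when $\mu_r(\eta)\ge 0$ and the second is $\le 0$ when $\mu_r(\eta)<0$; hence in either regime the exponential factor appearing in the integrand of the chosen formula is bounded in modulus by $1$, and the $e^{\pm q\tau}$-factor is bounded by $e^{2\pi|\operatorname{Re} q|}$.

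The prefactor $|1-e^{\pm 2\pi i(\mu_r(\eta)c_0-iq)}|^{-1}$ is at most $C\langle\eta\rangle^{M}$ by the Diophantine bound \eqref{ADC1} combined with Lemma \ref{expestimate}. Therefore $|\widehat u(t,\eta)_{rs}|\le C\langle\eta\rangle^{M}\sup_{s\in\mathbb{T}^1}|\widehat f(s,\eta)_{rs}|$, which already decays faster than any negative power of $\langle\eta\rangle$ by the smoothness of $f$ together with Theorem \ref{caracsmooth} applied to $f$. For higher $t$-derivatives, I differentiate under the integral using Leibniz's rule and the Fa\`a di Bruno formula: each $\partial_t$ acting on the exponential factor produces a factor of the form $-i\mu_r(\eta)[c(t)-c(t\pm\tau)]$, bounded by $\langle\eta\rangle$ times a uniform constant by \eqref{controleigen}, while derivatives hitting $\widehat f$ retain their rapid decay. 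Gathering the contributions yields $|\partial_t^\alpha\widehat u(t,\eta)_{rs}|\le C_{\alpha,N}\langle\eta\rangle^{M+\alpha-N}$ for every $\alpha\in\mathbb{N}_0$ and every $N>0$, so Theorem \ref{caracsmooth} delivers $u\in C^\infty(\mathbb{T}^1\times G)$.

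The principal obstacle will be executing the sign analysis of the real part of the exponent cleanly, verifying that the hypothesis that $b$ does not change sign (combined with $b_0>0$) forces the correct inequality in each case for all $t,\tau\in[0,2\pi]$; once $C$ is recognized as a $2\pi$-periodic function via $\widetilde B(t):=B(t)-b_0 t$, the apparent wrap-around when $t\pm\tau$ leaves $[0,2\pi]$ is only cosmetic. The Fa\`a di Bruno bookkeeping for higher derivatives is routine but must be organized so that the $\alpha$ extra powers of $\langle\eta\rangle$ produced by differentiating the exponential are genuinely absorbed by the rapid decay of $\partial_t^\beta \widehat f$.
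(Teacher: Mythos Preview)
Your proposal is correct and follows essentially the same approach as the paper: you choose between the two integral representations \eqref{solution2} and \eqref{solutionplus} according to the sign of $\mu_r(\eta)$, use $b\ge 0$ to make the real part of the exponent nonpositive in each regime, control the prefactor via Lemma \ref{expestimate}, and handle $t$-derivatives with Leibniz and Fa\`a di Bruno before appealing to Theorem \ref{caracsmooth}. The only cosmetic difference is that the paper packages the exponent as $H(\tau,t)=\int_{t-\tau}^t c(w)\,dw$ and tracks $\operatorname{Im} H$, whereas you compute $\operatorname{Re}$ of the full exponent directly; the computations are identical.
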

	\begin{proof}
		Assume that $b(t) \geq 0 $ for all $t \in \mathbb{T}^1$. By hypothesis, $b_0 \neq 0$, where $c_0=a_0+ib_0$. Notice that the global hypoellipticity of $L_{0}$ implies that $\mu_r(\eta)c_0-iq \in \Z$ for only finitely many representations. So there is no loss of generality to assume that $\mu_r(\eta)c_0-iq \notin \Z$. Let $f\in C^\infty(\mathbb{T}^1\times G)$ such that $L u = f$, for some $u \in \mathcal{D}'(\mathbb{T}^1\times G)$. Let us show that $u \in C^\infty(\mathbb{T}^1\times G)$. 
		
		Define
		$$
		H(\tau,t) = c_0\tau -C(t-\tau)+C(t).
		$$
		For $\mu_r(\eta)<0$, consider the solution \eqref{solution2}:
		\begin{equation}\label{solution3}
			\widehat{u}(t,\eta)_{rs}=\frac{1}{1-e^{-2\pi i (\mu_r(\eta)c_0-iq)}} \int_{0}^{2\pi}e^{-q\tau} e^{-i\mu_r(\eta)H(\tau,t)} \widehat{f}(t-\tau,\eta)_{rs}\mathrm{d}\tau
		\end{equation}
		and for $\mu_r(\eta)\geq0$, consider the solution \eqref{solutionplus}:
		\begin{equation}\label{solution4}
			\widehat{u}(t,\eta)_{rs}=\frac{1}{e^{2\pi i( \mu_r(\eta)c_0-iq)}-1} \int_{0}^{2\pi}e^{q\tau} e^{-i\mu_r(\eta)H(-\tau,t)} \widehat{f}(t+\tau,\eta)_{rs}\mathrm{d}\tau.
		\end{equation}
		
		Notice that
		\begin{align*}
			H(\tau,t)&= c_0\tau -C(t-\tau)+C(t) \nonumber \\
			& = c_0 \tau - \int_{0}^{t-\tau}c(w) \mathrm{d}w + (t-\tau)c_0 + \int_{0}^{t} c(w)\mathrm{d}w - tc_0 \nonumber \\
			& = \int_{t-\tau}^t c(w) \mathrm{d}w \nonumber
		\end{align*}
		So, using the fact that $b(t) \geq 0$, for all $t \in \mathbb{T}^1$, we obtain
		\begin{equation}\label{impab}
			\operatorname{Im}{H(\tau,t)} = \operatorname{Im}{\int_{t-\tau}^t c(w) \mathrm{d}w} = \int_{t-\tau}^t b(w) \mathrm{d}w \geq 0.
		\end{equation}
		\begin{equation*}
			\operatorname{Im}{H(-\tau,t)} = \operatorname{Im}{ \int_{t+\tau}^t c(w) \mathrm{d}w} = \int_{t+\tau}^t b(w) \mathrm{d}w \leq 0.
		\end{equation*} 
		
		Notice that there exist $K>0$ such that
		$$
		\left|e^{\pm q\tau}\right|  \leq K,
		$$
		because $0 \leq \tau \leq 2\pi$.  Let $\alpha \in \N_0$ and $\mu_r(\eta)c_0-iq \notin \Z$. If $\mu_r(\eta)<0$, by \eqref{solution3} we have
		{\small
			\begin{align*}
				\bigl| \partial_t^\alpha \widehat{u}(t,\eta)_{rs} \bigr| &= \bigg| \frac{1}{1-e^{-2\pi i (\mu_r(\eta)c_0-iq)}} \int_{0}^{2\pi} \partial_t^\alpha \bigl[e^{-q\tau}e^{-i\mu_r(\eta)H(\tau,t)} \widehat{f}(t-\tau,\eta)_{rs}\bigr] \mathrm{d}\tau \bigg| \nonumber  \\
				& \leq  \bigg| \frac{1}{1-e^{-2\pi i (\mu_r(\eta)c_0-iq)}} \bigg| \int_0^{2\pi} |e^{-q\tau}|  \sum_{\beta=0}^\alpha \binom{\alpha}{\beta} \left|\partial_t ^\beta e^{-i\mu_r(\eta)H(\tau,t)} \right|\left|\partial_t^{\alpha-\beta}\widehat{f}(t-\tau,\eta)_{rs}\right|\nonumber \mathrm{d}\tau. \nonumber
			\end{align*}
		}
		By the assumption of the global hypoellipticity of $L_{0}$, we obtain from Lemma \ref{expestimate} constants $C,M>0$ satisfying
		\begin{equation}\label{estimativaexp}
			|1-e^{-2\pi i (\mu_r(\eta)c_0-iq)}|^{-1} \leq C\jp{\eta}^{M}, 
		\end{equation}
		for all $[\eta]\in \widehat{G}$, $1 \leq r \leq d_\eta$, whenever $c_0\mu_r(\eta)-iq\notin \Z$.	By Fa\`a di Bruno's Formula, we have
		$$
		\partial_t^\beta e^{-i\mu_r(\eta)H(t,t)} = \sum_{\gamma \in \Delta(\beta)} \frac{\beta! }{\gamma!} (-i\mu_r(\eta))^{|\gamma|} e^{-i\mu_r(\eta)H(\tau,t)} \prod_{j=1}^{\beta} \left(\frac{\partial_t^j H(\tau,t)}{j!}\right)^{\gamma_j},
		$$
		where $\Delta(\beta) = \bigg\{\gamma \in \N_0^{\beta}; \ \sum\limits_{j=1}^\beta j\gamma_j= \beta \bigg\}$. Hence,
		$$
		\left|\partial_t^\beta e^{-i\mu_r(\eta)H(t,t)} \right| \leq \sum_{\gamma \in \Delta(\beta)} \frac{\beta! }{\gamma!} 
		|\mu_r(\eta)|^{|\gamma|} e^{\mu_r(\eta)\operatorname{Im}{H(\tau,t)}} \prod_{j=1}^{\beta} \left|\frac{\partial_t^j H(\tau,t)}{j!}\right|^{\gamma_j}.
		$$
		Notice that by \eqref{controleigen} we have
		$$
		|\mu_r(\eta)|^{|\gamma|} \leq \jp{\eta}^{|\gamma|}\leq \jp{\eta}^\beta,
		$$
		for all $[\eta]\in \widehat{G}$, $1 \leq r \leq d_\eta$, and $\gamma \in \Delta(\beta)$. Moreover, by \eqref{impab} we have
		$$
		e^{\mu_r(\eta)\operatorname{Im}{H(\tau,t)}} \leq 1.
		$$
		Thus,
		$$
		\left|\partial_t^\beta e^{-i\mu_r(\eta)H(t,t)} \right| \leq \jp{\eta}^{\beta}\sum_{\gamma \in \Delta(\beta)} \frac{\beta! }{\gamma!} \prod_{j=1}^{\beta} \left|\frac{\partial_t^j H(\tau,t)}{j!}\right|^{\gamma_j}.
		$$
		By the continuity of the function $H$ and the compactness of $\mathbb{T}^1$, for all $\beta \in \N_0$ there exists $C_\beta>0$ such that
		$$
		\sum_{\gamma \in \Delta(\beta)} \frac{\beta! }{\gamma!} \prod_{j=1}^{\beta} \left|\frac{\partial_t^j H(\tau,t)}{j!}\right|^{\gamma_j} \leq C_\beta,
		$$
		for all $0 \leq t,\tau \leq 2\pi$.
		
		Let $N>0$. Since $f \in C^\infty(\mathbb{T}^1\times G)$, by Theorem \ref{caracsmooth} for every $\beta\leq \alpha$ there exists ${C}_{\beta N}>0$ such that
		$$
		|\partial_t^{\alpha-\beta}\widehat{f}(t,\eta)_{rs}| \leq{C}_{\beta N} \langle \eta \rangle^{-(N+\beta+M)},
		$$ 
		for all $t \in \mathbb{T}^1$, with $M$ as in \eqref{estimativaexp}. Therefore,
		{\small
			\begin{align*}
				\bigl| \partial_t^\alpha \widehat{u}(t,\eta)_{rs} \bigr| &\leq   \bigg| \frac{1}{1-e^{-2\pi i (\mu_r(\eta)c_0-iq)}} \bigg| \int_0^{2\pi} |e^{-q\tau}|  \sum_{\beta=0}^\alpha \binom{\alpha}{\beta} \left|\partial_t ^\beta e^{-i\mu_r(\eta)H(\tau,t)} \right|\left|\partial_t^{\alpha-\beta}\widehat{f}(t-\tau,\eta)_{rs}\right|\nonumber \mathrm{d}\tau.\\
				&\leq   KC\jp{\eta}^M  \int_0^{2\pi} \sum_{\beta=0}^\alpha \binom{\alpha}{\beta}C_\beta \jp{\eta}^\beta C_{\beta N} \jp{\eta}^{-(N+\beta+M)}\, \mathrm{d}\tau\\
				& \leq C_{\alpha N} \jp{\eta}^{-N}.
			\end{align*}
		}
		We can obtain the same type of estimate when $\mu_r(\eta)\geq 0$. In this case, it is enough to consider the expression \eqref{solution4} to take the derivatives. We can adjust $C_{\alpha N}$, if necessary, to obtain
		$$
		\bigl| \partial_t^\alpha \widehat{u}(t,\eta)_{rs} \bigr|  \leq C_{\alpha N} \langle \eta\rangle^{-N},
		$$
		for every $[\eta] \in \widehat{G}$, $1\leq r,s\leq d_\eta$. By Theorem \ref{caracsmooth} we conclude that $u \in C^\infty(\mathbb{T}^1\times G)$. The case  $b(t) \leq 0$, for all $t\in \mathbb{T}^1$, is totally analogue, just use \eqref{solution3} for $\mu_r(\eta)\geq 0$ and \eqref{solution4} for $\mu_r(\eta)<0$.
	\end{proof}

	\begin{example}\label{exe}
		Let $G$ be a compact Lie group and $q\in i(\R\setminus\Z)$. The operator
		$$
		L:=\partial_t+(e^{it}+i)X+q
		$$
		is globally hypoelliptic. Indeed, we have $\operatorname{Im}{e^{it}+i}=\sin(t)+1\not\equiv 0$ and the operator
		$L_{0}=\partial_t+iX+q$ is globally hypoelliptic by Theorem \ref{GH-KMR} because in this case we have
		$$
		\mathcal{N}=\{[\eta]\in\widehat{G}; i\mu_r(\eta)-iq\in\Z\}=\varnothing,
		$$
		and
		$$
		|k+i\mu_r(\eta)-iq| \geq |k-iq| \geq C,
		$$
		for some $C>0$, for all $k\in\Z$, $[\eta]\in \widehat{G}$, $1 \leq r \leq d_\eta$. Since $\operatorname{Im}{e^{it}+i}=\sin(t)+1$ does not change sign, by Theorem \ref{suf} we conclude that $L$ is globally hypoelliptic.
	On the other hand, the operator 
	$$
		L:=\partial_t+(2e^{it}+i)X+q.
		$$
		is not globally hypoelliptic. Indeed, notice that $\operatorname{Im}{2e^{it}+i}=2\sin(t)+1\not\equiv 0$ and the operator $L_{0}=\partial_t+iX+q=L_0$ is globally hypoelliptic and  since  $\operatorname{Im}{2e^{it}+i}=2\sin(t)+1$ changes sign, we conclude by Theorem \ref{nec} that $L$ is not globally hypoelliptic.
		
		Finally, let us see that $b$ does not change sign is insufficient for the global hypoellipticity of $L$. Consider $G=\mathbb{S}^3$ and $X=i\partial_0$ (see \cite{KMP21} and \cite{livropseudo} for the precise Fourier analysis on $\mathbb{S}^3$) . We may identify $\widehat{\mathbb{S}^3} \sim \frac{1}{2}\mathbb{N}$ and in this case we have that $\sigma_X(\ell)_{rs} = ir\delta_{rs}$, for all $\ell \in \frac{1}{2}\mathbb{N}$, $r-\ell,s-\ell \in \mathbb{Z}$. The operator 
		$$
		L=\partial_t+(e^{it}+i)X+in,
		$$
		where $n\in \mathbb{Z}$, is not globally hypoelliptic. Indeed, we have the corresponding constant coefficient operator $L_0$ is given by $L_0=\partial_t+iX+in$ and
		$$
		\mathcal{N}=\left\{\ell \in \frac{1}{2}\mathbb{N}; -r+n \in \mathbb{Z}\right\} = \mathbb{N},
		$$
		which fails the condition 1. of Theorem \ref{GH-KMR} for the global hypoellipticity of $L_0$. Hence, by Proposition \ref{implicationGH} we conclude that $L$ is not globally hypoelliptic. We point out that this phenomenon occurs because when $G$ is not a torus, the fact that $b$ does not change sign does not imply the global hypoellipticity of $L_0$.
	\end{example}

	\section*{Acknowledgments}
	
	This study was financed by the National Council for Scientific and Technological Development – CNPq [423458/2021-3].
	\bibliographystyle{abbrv}
	\bibliography{biblio}
\end{document}